\newcommand{\e}{\varepsilon}
\newcommand{\C}{\mathbb{C}}
\newcommand{\R}{\mathbb{R}}
\newcommand{\RD}{{\mathbb{R}^2}}
\newcommand{\weakto}{\rightharpoonup}
\renewcommand{\le}{\leslant}
\renewcommand{\ge}{\geslant}
\renewcommand{\a }{\alpha }
\renewcommand{\b }{\beta }
\renewcommand{\d }{\delta }
\newcommand{\g }{\gamma }
\renewcommand{\l }{\lambda}
\newcommand{\n }{\nabla }
\newcommand{\s }{\sigma }
\renewcommand{\H}{H^1(\RD)}
\newcommand{\Hr}{H^1_r(\RD)}
\newcommand{\I}{\mathcal{I}}
\newcommand{\N}{\mathbb{N}}
\renewcommand{\C}{\mathbb{C}}
\renewcommand{\o}{\omega}
\def\bbm[#1]{\mbox{\boldmath $#1$}}
\newcommand{\beq }{\begin{equation}}
\newcommand{\eeq }{\end{equation}}
\renewcommand{\le}{\leqslant}
\renewcommand{\ge}{\geqslant}
\newcommand{\dis}{\displaystyle}
\newcommand{\ir}{\int_{-\infty}^{+\infty}}
\newcommand{\ird}{\int_{\R^2}}
\newtheorem{theorem}{Theorem}[section]
\newtheorem{lemma}[theorem]{Lemma}
\newtheorem{proposition}[theorem]{Proposition}
\newtheorem{remark}[theorem]{Remark}
\title[A Variational Analysis of a Gauged Nonlinear Schr\"{o}dinger Equation]
{A Variational Analysis of a Gauged Nonlinear Schr\"{o}dinger Equation}
\author[Pomponio]{Alessio Pomponio$^1$}
\address{$^1$Dipartimento di Meccanica, Matematica e Management, Politecnico di
Bari, Via E. Orabona 4, 70125 Bari, Italy.}
\author[Ruiz]{David Ruiz$^2$}
\address{$^2$Dpto. An\'{a}lisis Matem\'{a}tico, Granada, 18071 Spain.}
\thanks{A.P. is supported by M.I.U.R. - P.R.I.N.
``Metodi variazionali e topologici nello studio di fenomeni non
lineari'', by GNAMPA Project ``Metodi Variazionali e Problemi
Ellittici Non Lineari'' and by FRA2011 ``Equazioni ellittiche di
tipo Born-Infeld". D.R. is supported by the Spanish Ministry of
Science and Innovation under Grant MTM2011-26717 and by J.
Andalucia (FQM 116).}
\email{a.pomponio@poliba.it, daruiz@ugr.es}
\date{}
\keywords{Gauged Schr\"{o}dinger Equations, Chern-Simons theory,
Variational methods, concentration compactness.}
\subjclass[2010]{35J20, 35Q55.}
\begin{document}

\begin{abstract}

This paper is motivated by a gauged Schr\"{o}dinger equation in
dimension 2 including the so-called Chern-Simons term. The study
of radial stationary states leads to the nonlocal
problem:
$$  - \Delta u(x) + \left( \o + \dis \frac{h^2(|x|)}{|x|^2} +  \int_{|x|}^{+\infty} \frac{h(s)}{s} u^2(s)\, ds   \right)  u(x)  = |u(x)|^{p-1}u(x), $$
where
$$ h(r)= \frac{1}{2}\int_0^{r} s u^2(s)  \, ds.$$

This problem is the Euler-Lagrange equation of a
certain energy functional. In this paper the study of the global
behavior of such functional is completed. We show that for
$p\in(1,3)$, the functional may be bounded from below or not, depending on $\o $. Quite surprisingly, the
threshold value for $\o $ is explicit. From this study we
prove existence and non-existence of positive solutions.

\end{abstract}

\maketitle

\section{Introduction}

In this paper we are concerned with a planar gauged Nonlinear
Schr\"{o}dinger Equation: \beq \label{planar} i D_0\phi +
(D_1D_1 +D_2D_2)\phi + |\phi|^{p-1} \phi =0. \eeq

Here $t \in \R$, $x=(x_1, x_2) \in \R^2$, $\phi :
\R \times \R^2 \to \C$ is the scalar field, $A_\mu : \R\times \R^2
\to \R$ are the components of the gauge potential and $D_\mu =
\partial_\mu + i A_\mu$ is the covariant derivative ($\mu = 0,\
1,\ 2$).

The classical equation for the gauge potential $A_\mu$ is the Maxwell
equation. However, the modified gauge field equation proposes to
include the so-called Chern-Simons term into that equation (see for instance \cite[Chapter 1]{tar}):

\beq \label{gauge field} \partial_\mu F^{\mu \nu} + \frac 1 2
\kappa \epsilon^{\nu \alpha \beta} F_{\alpha \beta}= j^\nu, \ \mbox{ with } \ F_{\mu \nu}=
\partial_\mu A_\nu - \partial_\nu A_\mu. \eeq

In the above equation, $\kappa$ is a parameter that
measures the strength of the Chern-Simons term. As usual,
$\epsilon^{\nu \alpha \beta}$ is the Levi-Civita tensor, and
super-indices are related to the Minkowski metric with signature
$(1,-1,-1)$. Finally, $j^\mu$ is the conserved matter current,

$$ j^0= |\phi|^2, \ j^i= 2 {\rm Im} \left (\bar{\phi} D_i \phi \right).$$

At low energies, the Maxwell term becomes negligible and can be dropped, giving rise to: \beq \label{cs}
\frac 1 2 \kappa \epsilon^{\nu \alpha \beta} F_{\alpha \beta}=
j^\nu. \eeq See \cite{hagen, hagen2, jackiw0, jackiw, jackiw2} for
the discussion above.

For the sake of simplicity, let us fix $\kappa = 2$. Equations
\eqref{planar} and \eqref{cs} lead us to the problem:
 \beq \label{eq:e0}
\begin{array}{l}
i D_0\phi + (D_1D_1 +D_2D_2)\phi + |\phi|^{p-1} \phi =0,\\
\partial_0 A_1  -  \partial_1 A_0  = {\rm Im}( \bar{\phi}D_2\phi), \\
\partial_0 A_2  - \partial_2 A_0 = -{\rm Im}( \bar{\phi}D_1\phi), \\
\partial_1 A_2  -  \partial_2 A_1 =  \frac 1 2 |\phi|^2. \end{array}
\eeq

As usual in Chern-Simons theory, problem \eqref{eq:e0} is
invariant under gauge transformation,
\beq \label{gauge} \phi \to \phi e^{i\chi}, \quad A_\mu \to A_\mu
- \partial_{\mu} \chi, \eeq for any arbitrary $C^\infty$ function
$\chi$.

This model was first proposed and studied in \cite{jackiw0,
jackiw, jackiw2}, and sometimes has received the name of
Chern-Simons-Schr\"{o}dinger equation. The initial value problem,
as well as global existence and blow-up, has been addressed in
\cite{berge, huh, huh3} for the case $p=3$.

\

The existence of stationary states for \eqref{eq:e0} and general
$p>1$  has been studied recently in \cite{byeon} (with respect to that paper, our
notation interchanges the indices $1$ and $2$). By using the ansatz:
\begin{equation*}
\begin{array} {lll} \phi(t,x) = u(|x|) e^{i \omega t}, && A_0(x)= A_0(|x|),
\\
A_1(t,x)=\dis -\frac{x_2}{|x|^2}h(|x|), && A_2(t,x)= \dis \frac{x_1}{|x|^2}h(|x|),
\end{array}
\end{equation*}
in \cite{byeon} it is found that $u$ solves the equation:: \beq
\label{equation}   - \Delta u(x) + \left( \o + \xi+ \dis
\frac{h^2(|x|)}{|x|^2} +  \int_{|x|}^{+\infty} \frac{h(s)}{s}
u^2(s)\, ds   \right)  u(x)  =|u(x)|^{p-1}u(x), \quad x \in
\RD,\eeq
where
$$h(r)= \frac 12\int_0^r s u^2(s)  \, ds.$$
Here $\xi$ in $\R$ is an integration constant of $A_0$, which
takes the form:

$$A_0(r)= \xi + \int_r^{+\infty} \frac{h(s)}{s} u^2(s)\, ds.$$

Observe that \eqref{equation} is a nonlocal equation. Moreover, in
\cite{byeon} it is shown that \eqref{equation} is indeed the
Euler-Lagrange equation of the energy functional:
$$ I_{\o+\xi}:
H_r^1(\R^2) \to \R, $$
defined as
\begin{align*}
I_{\o+\xi}(u) & = \dis \frac 12 \int_{\R^2} \left(|\nabla u(x)|^2
+ (\o+ \xi) u^2(x) \right) \, dx \nonumber
\\
 & \quad+\dis \frac{1}{8} \int_{\R^2}
\frac{u^2(x)}{|x|^2}\left(\int_0^{|x|} s u^2(s) \, ds\right)^2 dx  -
\dis \frac{1}{p+1}  \int_{\R^2} |u(x)|^{p+1} \, dx.
\end{align*}
Here $H^1_r(\R^2)$ denotes the Sobolev space of radially symmetric
functions. It is important to observe that the energy functional
$I_{\o+\xi}$ presents a competition between the nonlocal term and the
local nonlinearity. The study of the behavior of the functional
under this competition is one of the main motivations of this
paper.

Given a stationary solution, and taking $\chi = c\, t$ in the
gauge invariance \eqref{gauge}, we obtain another stationary
solution; the functions $u(x)$, $A_1(x)$, $A_2(x)$ are preserved,
and
$$ \o \to \o+c, \quad A_0(x) \to A_0(x)-c $$

Therefore, the constant $\omega + \xi$ is a gauge invariant of the
stationary solutions of the problem. By the above discussion we
can take $\xi=0$ in what follows, that is,
 $$ \lim_{|x|\to + \infty} A_0(x)=0,$$
which was indeed assumed in \cite{berge, jackiw2}.

For $p>3$, it is shown in \cite{byeon} that $I_\o$ is unbounded
from below, so it exhibits a mountain-pass geometry. In a certain
sense, in this case the local nonlinearity dominates the
nonlocal term. However the existence of a solution is not so
direct, since for $p \in (3,5)$ the (PS) property is not known to
hold. This problem is bypassed in \cite{byeon} by using a
constrained minimization taking into account the Nehari and
Pohozaev identities, in the spirit of \cite{yo}. Moreover,
infinitely many solutions have been found in \cite{huh2} for $p>5$
(possibly sign-changing).

A special case in the above equation is $p=3$: in this case,
static solutions can be found by passing to a self-dual equation,
which leads to a Liouville equation that can be solved explicitly.
Those are the unique positive solutions, as proved in
\cite{byeon}. For more information on the self-dual equations, see \cite{dunne,
jackiw2, tar}.

In case $p\in(1,3)$, solutions are found in \cite{byeon} as
minimizers on a $L^2$ sphere. Therefore, the value $\o$ comes out
as a Lagrange multiplier, and it is not controlled. Moreover, the
global behavior of the energy functional $I_\o$ is not studied.

The main purpose of this paper is to study whether $I_\o$ is
bounded from below or not for $p\in (1,3)$. In this case, the
nonlocal term prevails over the local nonlinearity, in a certain
sense. As we shall see, the situation is quite rich and unexpected
a priori, and very different from the usual Nonlinear Schr\"{o}dinger
Equation. This situation differs also from the Schr\"{o}dinger-Poisson
problem (see \cite{yo}), which is another problem presenting
a competition between local and nonlocal nonlinearities.

We shall prove the existence of a threshold value $\o_0$ such that
$I_\o$ is bounded from below if $\o \ge \o_0$, and it is not for
$\o \in (0,\o_0)$. But, in our opinion, what is most surprising is
that $\o_0$ has an explicit expression,
namely:
\beq \label{w0 bis} \omega_0= \frac{3-p}{3+p}\
3^{\frac{p-1}{2(3-p)}}\ 2^{\frac{2}{3-p}} \left(
\frac{m^2(3+p)}{p-1}\right)^{-\frac{p-1}{2(3-p)}}, \eeq with
$$ m = \ir  \left ( \frac{2}{p+1} \cosh^2 \left (\frac{p-1}{2} r \right) \right)^{\frac{2}{1-p}} \, dr.$$

Let us give an idea of the proofs. It is not difficult to show
that $I_\o$ is coercive when the problem is posed on a bounded
domain. So, there exists a minimizer $u_n$ on the ball $B(0,n)$
with Dirichlet boundary conditions. To prove boundedness of $u_n$,
the problem is the possible loss of mass at infinity as $n \to
+\infty$. The core of our proofs is a detailed study of the
behavior of those masses. We are able to show that, if unbounded,
the sequence $u_n$ behaves as a soliton, if $u_n$ is interpreted
as a function of a single real variable. The proof uses a careful
study of the level sets of $u_n$, which take into account the
effect of the nonlocal term. Then, the energy functional $I_\o$
admits a natural approximation through a convenient limit
functional. Finally, the solutions of that limit functional, and
their energy, can be found explicitly, so we can find $\o_0$. See Section 2 for an heuristic explanation of the proof and a derivation of the limit functional.

Regarding the existence of solutions, a priori, the global
minimizer could correspond to the zero solution. And indeed
this is the case for large $\o$. Instead, we show that $\inf I_\o
< 0$ if $\o > \o_0$ is close to the threshold value. Therefore, the
global minimizer is not trivial, and corresponds to a positive
solution. The mountain pass theorem will provide the existence of
a second positive solution.

If $\o<\o_0$, $I_\o$ is unbounded from below, and hence the
geometric assumptions of the mountain-pass theorem are satisfied.
However, the boundedness of (PS) sequences seems to be a hard
question in this case. Solutions are found for almost all values
of $\o \in (0,\o_0)$, by using the well-known monotonicity trick
of Struwe \cite{struwe} (see also \cite{jeanjean}).

Our main results are the following:

\begin{theorem} \label{teo1} For $\o_0$ as given in  \eqref{w0 bis}, there holds:

\begin{enumerate}[label=(\roman*), ref=\roman*]
\item \label{en:<} if $\o\in (0,\o_0)$, then $I_\o$ is unbounded
from below;

\item \label{en:=} if $\o=\o_0$, then $I_{\o_0}$ is bounded from
below, not coercive and $\inf I_{\o_0}<0$;

\item  \label{en:>} if $\o>\o_0$, then $I_\o$ is bounded from
below and coercive.
\end{enumerate}
\end{theorem}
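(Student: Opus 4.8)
\emph{Reduction and the limit functional.} Writing the energy in radial coordinates and using $\bigl(\int_0^r s\,u^2\bigr)^2=4h^2$, one has, up to the positive factor $\pi$,
\[ I_\o(u) = \int_0^{+\infty}\!\bigl(|u'|^2+\o u^2\bigr)r\,dr + \int_0^{+\infty}\!\frac{u^2 h^2}{r}\,dr - \frac{2}{p+1}\int_0^{+\infty}\!|u|^{p+1}r\,dr, \qquad h(r)=\tfrac12\!\int_0^r\! s\,u^2(s)\,ds. \]
The whole analysis is organized around a mass concentrating on a thin annulus of large radius $R$. Reading $u$ as a function of the single variable $s=r-R$, i.e. $u(r)=w(r-R)$, and setting $W(s)=\int_{-\infty}^{s}w^2$, one finds $h(r)\approx\frac R2 W(r-R)$; since $r\approx R$ each term carries the common factor $R$, and the algebraic identity $\ir w^2W^2=\ir W^2W'=\frac13\bigl(\ir w^2\bigr)^3$ collapses the nonlocal term, so that (heuristically, cf. Section~2)
\[ \frac1R\,I_\o\bigl(w(\cdot-R)\bigr)\ \longrightarrow\ J_\o(w):=\ir (w')^2 + \o\ir w^2 + \frac1{12}\Bigl(\ir w^2\Bigr)^3 - \frac{2}{p+1}\ir|w|^{p+1}. \]
The plan is to reduce the three assertions to the fact that $\inf J_\o=0$ for $\o\ge\o_0$ and $\inf J_\o=-\infty$ for $\o<\o_0$. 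Because $J_\o$ is homogeneous under the two–parameter scaling $w\mapsto a\,w(b\,\cdot)$, its infimum is always $0$ or $-\infty$; the threshold is located by requiring a nonzero minimizer. Imposing that the amplitude- and dilation-derivatives of $J_\o$ vanish together with $J_\o=0$ gives three relations among $G=\ir(w_*')^2$, $M=\ir w_*^2$ and $P=\ir|w_*|^{p+1}$; combined with the Euler--Lagrange equation $-w_*''+(\o_0+\frac14 M^2)w_*=|w_*|^{p-1}w_*$ (an explicit soliton) and the Nehari and Pohozaev identities, a direct computation yields the explicit value \eqref{w0 bis}, with $m=\ir\tilde w^2$ the mass of the normalized soliton solving $-\tilde w''+\tilde w=\tilde w^p$.

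\emph{Proof of (i).} Let $w_*$ be the minimizer of $J_{\o_0}$ and put $u_R=w_*(\cdot-R)$. Since $w_*$ is even, the gradient, mass and nonlinearity terms equal exactly $R$ times their $J$-counterparts, the only correction coming from the nonlocal term, so $I_\o(u_R)=\pi R\,J_\o(w_*)+O(1)$. As $J_\o(w_*)=J_{\o_0}(w_*)+(\o-\o_0)M=(\o-\o_0)M<0$ for $\o<\o_0$, one gets $I_\o(u_R)\to-\infty$, proving (i).

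\emph{Proof of (iii).} Argue by contradiction: suppose there are radial $u_n$ with $\|u_n\|_{H^1_r}\to\infty$ while $I_\o(u_n)\le C$. The $1/r$ weight makes it energetically favorable for the mass to escape to infinity, so a careful study of the level sets of $u_n$ shows that its relevant part sits on annuli of radii $R_n\to\infty$; translating by $R_n$, rescaling to the soliton scale and passing to the limit yields a nonzero $w$ with $\liminf_n I_\o(u_n)/(\pi R_n)\ge J_\o(w)\ge\inf J_\o$. For $\o>\o_0$ one has $\inf J_\o=0$, attained only at $w=0$, so no such nonzero $w$ exists: contradiction, and $I_\o$ is coercive (boundedness from below is the same argument with a non-strict sign). \textbf{The main obstacle} is exactly this concentration step: proving that a norm-diverging, energy-bounded sequence must converge, after the correct radial translation and rescaling, to a nontrivial profile governed by $J_\o$, and controlling the nonlocal term throughout—its leading contribution coming from the constant tail of $h$ outside the concentration annulus. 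This is the level-set analysis announced in the Introduction.

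\emph{Proof of (ii).} At $\o=\o_0$ the sharp value $\inf J_{\o_0}=0$ gives $I_{\o_0}\ge -C$, hence boundedness from below, but no coercive surplus; non-coercivity is witnessed by $u_R=w_*(\cdot-R)$, along which $\|u_R\|_{H^1_r}\to\infty$ while $I_{\o_0}(u_R)$ stays bounded, its leading term $\pi R\,J_{\o_0}(w_*)$ being zero. Finally $\inf I_{\o_0}<0$ follows by expanding $I_{\o_0}(u_R)$ to next order: the leading term vanishing, the sign is fixed by the $O(1)$ correction of the nonlocal term (the deviation of $r$ from $R$ and of $h$ from $\frac R2 W$), which a direct computation shows to be strictly negative for $R$ large.
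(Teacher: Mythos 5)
Your strategy is the paper's own: reduce to the one-dimensional limit functional $J_\o$ via the translation asymptotics $I_\o(U(\cdot-R))\approx 2\pi R\, J_\o(U)+O(1)$, locate $\o_0$ as the value where $\inf J_\o$ leaves zero, and exploit the explicit solitons. Part (i) is essentially complete and coincides with the paper's argument (Lemma \ref{le:asin} applied to the global minimizer of the limit functional). But for (ii) and (iii) there is a genuine gap, and you flag it yourself: the whole concentration step --- that a norm-diverging sequence with energy bounded above must, after a radial translation, carry a nontrivial profile governed by $J_\o$ --- is asserted, not proved. This is not a deferrable technicality; it is the core of the theorem and occupies most of Section 4 of the paper (Proposition \ref{prop-fund}, built on the level-set analysis of the sets where $\alpha<u_n<\beta$, inequality \eqref{ineq}, Strauss' lemma, concentration--compactness, exponential decay, and the cut-off splitting that produces the crucial extra term $c\|u_n(1-\psi_n)\|_{L^2(\RD)}^2$, plus the dichotomy of Proposition \ref{extra} for the limit functional). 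Moreover, your sketch as written is circular: the inequality $\liminf_n I_\o(u_n)/(\pi R_n)\ge J_\o(w)$ requires a lower bound on the energy of the leftover piece $u_n(1-\psi_n)$, which is exactly the boundedness from below you are trying to establish. The paper escapes this circularity by taking $u_n$ to be minimizers of $I_\o$ on the balls $B(0,n)$, so that $I_\o(u_n(1-\psi_n))\ge I_\o(u_n)$ by the very definition of $u_n$, and the unknown infimum cancels from the two sides of the splitting inequality. Without this device (or a substitute), your contradiction does not close.

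A second, independent gap concerns $\o=\o_0$. Your contradiction in (iii) rests on the fact that $J_\o(w)>0$ for every $w\neq 0$, which holds for $\o>\o_0$ but fails at $\o=\o_0$, where $J_{\o_0}(w_{k_2})=0$: the escaping mass can be asymptotically energy-neutral, so the one-line claim ``$\inf J_{\o_0}=0$ gives $I_{\o_0}\ge -C$'' begs the question. The paper needs a separate and more delicate argument here (Step 6 of its proof): Proposition \ref{extra} forces $\psi_n u_n(\cdot-t_n)\to w_{k_2}$, which upgrades the decay estimate to one centered at $\xi_n$, permits a cut-off with exponentially small errors, and yields $I_{\o_0}(u_n)\ge I_{\o_0+2c}\big(u_n(1-\tilde\psi_n)\big)+O(1)$, reducing matters to the already-settled coercive case $\o>\o_0$. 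Your remaining claims --- that the $O(1)$ correction in the expansion of $I_{\o_0}(w_*(\cdot-R))$ is strictly negative, whence $\inf I_{\o_0}<0$ and non-coercivity, and that the Nehari/Pohozaev computation yields the explicit value \eqref{w0 bis} --- are correct in substance and match Lemma \ref{le:asin} and Propositions \ref{explicit} and \ref{gluing}, but there too the decisive sign and the algebra are asserted rather than carried out.
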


Regarding the existence of solutions, we obtain the following
result:

\begin{theorem} \label{teo2} Consider \eqref{equation} with $\xi=0$. There exist $\bar{\omega} > \tilde{\o}>\o_0$ such that:

\begin{enumerate}[label=(\roman*), ref=\roman*]
\item if $\o> \bar{\o}$, then \eqref{equation} has no solutions
different from zero;
\item if $\o\in (\o_0,\tilde{\o})$, then
\eqref{equation} admits at least two positive solutions: one of
them is a global minimizer for $I_\o$ and the other is a
mountain-pass solution;
\item for almost every $\o\in (0,\o_0)$
\eqref{equation} admits a positive solution.
\end{enumerate}
\end{theorem}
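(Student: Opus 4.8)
I begin with assertion (ii). Write $K(u)=\frac18\ird \frac{u^2(x)}{|x|^2}\big(\int_0^{|x|}s\,u^2(s)\,ds\big)^2\,dx\ge 0$ for the nonlocal term, so that $I_\o(u)=\frac12\|\nabla u\|_2^2+\frac\o2\|u\|_2^2+K(u)-\frac1{p+1}\|u\|_{p+1}^{p+1}$. By Theorem~\ref{teo1}(\ref{en:>}), for $\o>\o_0$ the functional $I_\o$ is bounded below and coercive on $\Hr$; a minimizing sequence is therefore bounded, and along a subsequence $u_n\weakto u$. The embedding $\Hr\hookrightarrow L^{p+1}(\RD)$ is compact, while the quadratic terms and (by Fatou, using a.e.\ convergence) the nonlocal term $K$ are weakly lower semicontinuous, so $I_\o(u)\le\liminf_n I_\o(u_n)=\inf I_\o$ and the minimum is attained at $u$. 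To exclude $u=0$ I must check $\inf I_\o<0$: choosing $u_*$ with $I_{\o_0}(u_*)<0$, which exists by Theorem~\ref{teo1}(\ref{en:=}), the identity $I_\o(u_*)=I_{\o_0}(u_*)+\tfrac{\o-\o_0}2\|u_*\|_2^2$ shows $I_\o(u_*)<0$ for all $\o$ below some $\tilde\o>\o_0$; thus the minimizer is nontrivial on $(\o_0,\tilde\o)$. Replacing $u$ by $|u|$ leaves $I_\o$ unchanged, and elliptic regularity together with the strong maximum principle then give a positive solution. For the second solution I use that, since $p+1>2$, one has $I_\o(u)\ge\frac12\min\{1,\o\}\|u\|_{\Hr}^2-c\|u\|_{\Hr}^{p+1}$, so the origin is a strict local minimum with $I_\o(0)=0$ while the minimizer lies strictly below it; hence $I_\o$ has a mountain-pass level $c_{MP}>0>\inf I_\o$. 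Coercivity makes every Palais--Smale sequence bounded, and radial compactness yields the $(PS)$ condition, so the mountain-pass theorem produces a critical point $u_1$ with $I_\o(u_1)=c_{MP}>0$, necessarily different from $0$ and from the minimizer; working with the nonlinearity $(u^+)^{p+1}$ ensures $u_1>0$.

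For (i) I would rely on the two natural identities for a solution $u$ of \eqref{equation}. Testing with $u$ gives the Nehari identity $\|\nabla u\|_2^2+\o\|u\|_2^2+6K(u)=\|u\|_{p+1}^{p+1}$ (here $K$ is homogeneous of degree $6$), and the dilation $u_t(x)=u(x/t)$ — under which the Dirichlet integral is invariant in $\RD$, the mass and $L^{p+1}$ terms scale like $t^2$ and $K$ like $t^4$ — gives the Pohozaev identity $\o\|u\|_2^2+4K(u)=\frac2{p+1}\|u\|_{p+1}^{p+1}$. Subtracting yields $\|\nabla u\|_2^2+2K(u)=\frac{p-1}{p+1}\|u\|_{p+1}^{p+1}$, hence $\o\|u\|_2^2=\frac2{p+1}\|u\|_{p+1}^{p+1}-4K(u)$. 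Feeding in the Gagliardo--Nirenberg inequality $\|u\|_{p+1}^{p+1}\le C_p\|\nabla u\|_2^{p-1}\|u\|_2^2$ and a sharp control of the nonlocal term, I would extract an a priori bound $\o\le\bar\o$, i.e.\ non-existence for $\o>\bar\o$. I expect this to be the main obstacle: the repulsive nonlocal term tends to spread the mass, and the two identities together with Gagliardo--Nirenberg do not by themselves control $\|u\|_2^2$, so the crux is to rule out solutions of arbitrarily large mass, presumably through the same explicit quantification of the nonlocal interaction that produces the threshold $\o_0$.

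Finally, for (iii) and $\o\in(0,\o_0)$, Theorem~\ref{teo1}(\ref{en:<}) shows that $I_\o$ is unbounded from below, so the mountain-pass geometry of (ii) persists; what fails is coercivity, and with it the boundedness of $(PS)$ sequences. I would fix a compact interval $[a,b]\subset(0,\o_0)$ and a single $e$ with $I_b(e)<0$, so that $I_\o(e)\le I_b(e)<0$ for every $\o\le b$ and the mountain-pass level $c(\o)$ is defined over a geometry independent of $\o$. Since $\o\mapsto I_\o(u)$ is non-decreasing for each fixed $u$, the map $\o\mapsto c(\o)$ is non-decreasing, hence differentiable for almost every $\o$. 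At each such point Struwe's monotonicity trick, in the form used in \cite{jeanjean}, provides a \emph{bounded} $(PS)$ sequence at level $c(\o)$; boundedness together with radial compactness then produces a nontrivial critical point, positive as above. This yields a positive solution for almost every $\o\in(0,\o_0)$ and completes the plan.
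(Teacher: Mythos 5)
Your parts (ii) and (iii) are essentially workable, but part (i) contains a genuine gap --- one you flag yourself. The route Nehari $+$ Pohozaev $+$ Gagliardo--Nirenberg cannot produce $\bar\o$: feeding Gagliardo--Nirenberg into your Pohozaev identity gives
\begin{equation*}
\o\|u\|_{L^2}^2\le \o\|u\|_{L^2}^2+4K(u)=\frac{2}{p+1}\|u\|_{L^{p+1}}^{p+1}\le \frac{2C_p}{p+1}\|\n u\|_{L^2}^{p-1}\|u\|_{L^2}^2,
\end{equation*}
i.e.\ $\o\le C\|\n u\|_{L^2}^{p-1}$, which is a \emph{lower} bound on the gradient of any nontrivial solution, never an upper bound on $\o$: solutions with large gradient are not excluded for any value of $\o$. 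The missing idea is inequality \eqref{ineq} (proved in \cite{byeon}), which is exactly the ``sharp control of the nonlocal term'' you were hoping for. Writing
\begin{equation*}
N(u)=8K(u)=\ird\frac{u^2(x)}{|x|^2}\left(\int_0^{|x|}su^2(s)\,ds\right)^2dx,
\end{equation*}
inequality \eqref{ineq} combined with $2ab\le a^2+b^2$ gives
\begin{equation*}
\ird u^4\,dx\le 2\left(\ird|\n u|^2\,dx\right)^{1/2}N(u)^{1/2}\le \ird|\n u|^2\,dx+N(u).
\end{equation*}
The paper's proof of (i) then uses only your Nehari identity $0=\ird\left(|\n u|^2+\o u^2-|u|^{p+1}\right)dx+\tfrac34 N(u)$ (note $6K=\tfrac34 N$), splitting
\begin{equation*}
\ird|\n u|^2\,dx+\frac34 N(u)=\frac14\ird|\n u|^2\,dx+\frac34\left(\ird|\n u|^2\,dx+N(u)\right)\ge \frac14\ird|\n u|^2\,dx+\frac34\ird u^4\,dx,
\end{equation*}
so that
\begin{equation*}
0\ge \frac14\ird|\n u|^2\,dx+\ird\left(\o u^2+\frac34 u^4-|u|^{p+1}\right)dx.
\end{equation*}
Since $2<p+1<4$, the quantity $\bar\o:=\sup_{t>0}\left(t^{p-1}-\tfrac34 t^2\right)$ is finite, and for $\o>\bar\o$ the integrand $\o t^2+\tfrac34 t^4-t^{p+1}$ is non-negative for all $t\ge0$, forcing $u\equiv 0$. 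Thus non-existence is a \emph{pointwise} phenomenon once the nonlocal term is traded for $\ird u^4$ via \eqref{ineq}; no Pohozaev identity and no control of the mass are needed. Without \eqref{ineq}, or some substitute converting $N(u)$ into a local quartic quantity, your outline for (i) cannot be closed.

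As for the rest: your (ii) is essentially the paper's own argument (coercivity and weak lower semicontinuity give a minimizer; $I_\o(u_*)=I_{\o_0}(u_*)+\frac{\o-\o_0}{2}\|u_*\|_{L^2}^2<0$ for $\o$ close to $\o_0$ makes it nontrivial; mountain pass plus compactness of $\Hr\hookrightarrow L^{p+1}(\R^2)$ gives the second solution), though for the (PS) condition and for passing to the limit in $I_\o'$ along weakly convergent sequences you should invoke Proposition \ref{prop:weak} rather than bare ``radial compactness'', since the nonlocal term is not handled by the compact embedding alone. Your (iii) is a correct variant of the paper's: you run the monotonicity trick directly on the increasing family $\o\mapsto I_\o$, whereas the paper rescales $u\mapsto\sqrt{\o}\,u(\sqrt{\o}\,\cdot)$ so that the parameter $\l=\o^{\frac{p-3}{2}}$ multiplies the $L^{p+1}$ term and \cite[Theorem 1.1]{jeanjean} applies literally. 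To cite that theorem in your formulation, rewrite $I_\o=I_{\o_0}-(\o_0-\o)B$ with $B(u)=\frac12\|u\|_{L^2}^2\ge0$ and $\o_0-\o$ ranging in a compact subinterval of $(0,\o_0)$, and verify the structural hypothesis: bounded $B$ together with bounded energy bounds $\|\n u\|_{L^2}$, by Gagliardo--Nirenberg and $p-1<2$; almost-every-ness is preserved under this affine reparametrization. Both routes are legitimate --- yours avoids the rescaling, the paper's avoids adapting the abstract result --- so (iii) is fine, and the only substantive defect of the proposal is part (i).
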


The rest of the paper is organized as follows. Section 2 is
devoted to some preliminary results. Moreover, we give a heuristic
presentation of our proofs, which motivates the definition of the
limit functional. This limit functional is studied in detail in
Section 3. Finally, in Section 4 we prove Theorems \ref{teo1} and
\ref{teo2}.

\subsection*{Acknowledgement}
This work has been partially carried out during a stay of A.P. in Granada. He would like to
express his deep gratitude to the Departamento de An\'{a}lisis
Matem\'{a}tico for the support and warm hospitality.

\section{Preliminaries}

Let us first fix some notations. We denote by $H_r^1(\R^2)$ the
Sobolev space of radially symmetric functions, and $\| \cdot \|$ its usual norm. Other norms, like Lebesgue norms, will be
indicated with a subscript. In particular, $\|\cdot\|_{H^1(\R)}$, $\|\cdot\|_{H^1(a,b)}$ are
used to indicate the norms of the Sobolev spaces of dimension $1$.
If nothing is specified, strong and weak convergence of sequences
of functions are assumed in the space $\H$.

In our estimates, we will frequently denote by $C>0$, $c>0$ fixed
constants, that may change from line to line, but are always
independent of the variable under consideration. We also use the
notations $O(1), o(1), O(\e), o(\e)$ to describe the asymptotic
behaviors of quantities in a standard way. Finally the letters
$x$, $y$ indicate two-dimensional variables and $r$, $s$ denote
one-dimensional variables.

\

Let us start with the following proposition, proved in
\cite{byeon}:

\begin{proposition} $I_\o$ is a $C^1$ functional, and its critical
points correspond to classical solutions of \eqref{equation}.
\end{proposition}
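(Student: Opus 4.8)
The plan is to write $I_\o = Q + N - P$, where
\[ Q(u) = \frac12\int_{\RD}\big(|\n u|^2 + \o\, u^2\big)\,dx, \qquad P(u) = \frac{1}{p+1}\int_{\RD}|u|^{p+1}\,dx, \]
and $N$ is the nonlocal term. The functional $Q$ is a continuous quadratic form, hence smooth, with $Q'(u)[v]=\int_{\RD}(\n u\cdot\n v+\o\, uv)\,dx$. For $P$, since $\Hr\hookrightarrow L^q(\RD)$ continuously for every $q\in[2,+\infty)$, the associated Nemytskii operator $u\mapsto |u|^{p-1}u$ maps $\Hr$ continuously into its dual for any $p>1$; the standard theory of such operators then gives $P\in C^1(\Hr)$ with $P'(u)[v]=\int_{\RD}|u|^{p-1}u\,v\,dx$. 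Everything therefore reduces to the nonlocal term, which is the only delicate point.

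Writing $h(r)=\tfrac12\int_0^r s\,u^2\,ds$ and passing to polar coordinates one has
\[ N(u)=\frac12\int_{\RD}\frac{h^2(|x|)}{|x|^2}\,u^2\,dx=\pi\int_0^{+\infty}\frac{u^2(r)\,h^2(r)}{r}\,dr. \]
The first ingredient is the uniform bound $0\le h(r)\le \tfrac12\int_0^{+\infty}s\,u^2\,ds=\tfrac{1}{4\pi}\|u\|_{L^2(\RD)}^2$, so that $h\in L^\infty$ with $\|h\|_\infty\le C\|u\|^2$. For the tail one uses the radial decay estimate $|u(r)|\le C\,r^{-1/2}\|u\|$, which makes the integrand $O(r^{-2})$ on $[1,+\infty)$; near the origin one exploits $h(r)=\tfrac{1}{4\pi}\|u\|_{L^2(B(0,r))}^2\to 0$ together with the radial Sobolev embedding to absorb the singular weight, thereby proving $N(u)<\infty$ and the continuity of $N$. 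The cleanest route is to establish these estimates first for radial $u\in C_c^\infty$ and then pass to the limit by density.

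Linearizing $h$ in a direction $v$ produces $\ell(r):=\int_0^r s\,u\,v\,ds$, and a direct computation of $\tfrac{d}{dt}N(u+tv)\big|_{t=0}$ gives
\[ N'(u)[v]=\int_{\RD}\frac{h^2(|x|)}{|x|^2}\,u\,v\,dx+\int_{\RD}\frac{h(|x|)}{|x|^2}\,u^2\,\ell(|x|)\,dx. \]
The first term is already in the desired form. For the second, passing to polar coordinates and applying Fubini's theorem to interchange the order of integration in $\int_0^{+\infty}\frac{u^2 h}{r}\big(\int_0^r s\,u\,v\,ds\big)\,dr$ converts it into $\int_{\RD}\big(\int_{|x|}^{+\infty}\frac{h(s)}{s}u^2(s)\,ds\big)u\,v\,dx$, which is precisely the second nonlocal coefficient appearing in \eqref{equation}. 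Hence
\[ I_\o'(u)[v]=\int_{\RD}\n u\cdot\n v\,dx+\int_{\RD}\Big(\o+\frac{h^2(|x|)}{|x|^2}+\int_{|x|}^{+\infty}\frac{h(s)}{s}u^2(s)\,ds\Big)u\,v\,dx-\int_{\RD}|u|^{p-1}u\,v\,dx, \]
the weak form of \eqref{equation}. Continuity of $u\mapsto N'(u)$ in the dual norm, via the same estimates, then completes the proof that $I_\o\in C^1(\Hr)$.

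For the regularity statement, a critical point $u$ is a weak radial solution of $-\Delta u+V(x)\,u=|u|^{p-1}u$, with $V(x)=\o+\frac{h^2(|x|)}{|x|^2}+\int_{|x|}^{+\infty}\frac{h(s)}{s}u^2(s)\,ds$. Away from the origin $u$ is continuous and decaying, so $h\in C^1$ and $V$ is continuous there; a standard elliptic bootstrap applies, namely $u\in L^q_{\mathrm{loc}}$ for all $q<\infty$ forces the right-hand side into $L^q_{\mathrm{loc}}$, hence $u\in W^{2,q}_{\mathrm{loc}}$ and $u\in C^{1,\alpha}_{\mathrm{loc}}$, and upgrading the regularity of $V$ and of $|u|^{p-1}u$ yields $u\in C^2$, a classical solution; at the origin one checks that the weight $|x|^{-2}$ is compensated by the vanishing of $h$, so $V\in L^\infty_{\mathrm{loc}}$ and the conclusion persists. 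The main obstacle throughout is the nonlocal term: proving it is finite and $C^1$ requires the careful radial estimates near the origin, and identifying the correct Euler–Lagrange equation hinges on the Fubini interchange that generates the second, genuinely nonlocal, coefficient.
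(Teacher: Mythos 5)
Your proposal is correct in substance, but note that the paper does not actually prove this proposition: it is quoted directly from \cite{byeon}, so your argument is not a variant of the paper's proof but a self-contained reconstruction of what the authors delegate to the literature. Your route is the natural one and the key computations are sound: the splitting $I_\o=Q+N-P$, the reduction of all difficulty to the nonlocal term $N(u)=\frac12\int_{\RD}\frac{h^2(|x|)}{|x|^2}\,u^2\,dx$, the uniform bound $h(r)\le\frac{1}{4\pi}\|u\|_{L^2}^2$, the Strauss decay $|u(r)|\le Cr^{-1/2}\|u\|$ for the tail, and above all the Fubini interchange that converts $\int_{\RD}\frac{h(|x|)}{|x|^2}\,u^2\,\ell(|x|)\,dx$ into $\int_{\RD}\big(\int_{|x|}^{+\infty}\frac{h(s)}{s}u^2(s)\,ds\big)\,u\,v\,dx$ --- this is precisely the step that makes the critical-point equation coincide with \eqref{equation} rather than with a wrong local equation. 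Two points would need more care in a full write-up: first, continuity of $N'$ from $\Hr$ into the dual is asserted ``via the same estimates'' but is the technical heart of the $C^1$ claim; one should verify, for instance, that $u\mapsto h_u(r)/r$ and $u\mapsto\int_{|x|}^{+\infty}\frac{h_u(s)}{s}u^2(s)\,ds$ are continuous into $L^\infty$, using $h_u(r)/r\le C\|u\|_{L^4}^2$ near the origin. Second, in the bootstrap at the origin you check that $|x|^{-2}$ is compensated by the vanishing of $h$, but the other coefficient $\int_{|x|}^{+\infty}\frac{h(s)}{s}u^2(s)\,ds$ must also be shown locally bounded, which uses the boundedness of $h(s)/s$ together with the integrability of $u^2(s)\,ds$ (without the weight $s$) near $s=0$; this holds because radial $H^1(\RD)$ functions grow at most logarithmically at the origin. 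Both points are fixable with the estimates you already have, so the proposal stands; what it buys over the paper's one-line citation is a self-contained verification, in particular of the identification of the Euler--Lagrange equation.
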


Next result deals with the behavior of $I_\o$ under weak limits in
$H^1_r(\R^2)$. Even if it is not explicitly stated in this form,
Proposition \ref{prop:weak} follows easily from \cite[Lemma~3.2]{byeon} and the compactness of the embedding $H^1_r(\R^2)
\hookrightarrow L^q(\R^2)$, $q \in(2,+\infty)$ (see
\cite{strauss}).

\begin{proposition} \label{prop:weak} If $u_n \weakto u$, then
\[ 
\int_{\R^2} \frac{u_n^2(x)}{|x|^2}\left(\int_0^{|x|} s u_n^2(s) \, ds\right)^2 dx
\to  \int_{\R^2} \frac{u^2(x)}{|x|^2}\left(\int_0^{|x|} s u^2(s) \,
ds\right)^2 dx.
\]

In particular, $I_\o$ is weak lower semicontinuous. Moreover, if
$u_n \weakto u$ then $I_\o'(u_n)(\varphi) \to I_\o'(u)(\varphi)$
for all $\varphi\in H^1_r(\R^2)$.

\end{proposition}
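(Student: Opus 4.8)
The plan is to reduce every claim to strong convergence in Lebesgue spaces. Since $p\in(1,3)$ we have $p+1\in(2,4)$, and by the compactness of the Strauss embedding $\Hr\hookrightarrow L^q(\R^2)$ for every $q\in(2,+\infty)$, from $u_n\weakto u$ one may extract a subsequence with $u_n\to u$ strongly in $L^q(\R^2)$ for a finite collection of such exponents. As the weak limit is always $u$, a standard subsequence argument will upgrade the resulting conclusions to the full sequence.

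First I would prove the convergence of the nonlocal term. Writing
\[ \mathcal{T}(u)=\ird\frac{u^2(x)}{|x|^2}A_u^2(x)\,dx,\qquad A_u(x)=\int_0^{|x|}s\,u^2(s)\,ds=\tfrac{1}{2\pi}\int_{|y|\le|x|}u^2(y)\,dy, \]
I would split
\[ \mathcal{T}(u_n)-\mathcal{T}(u)=\ird\frac{(u_n^2-u^2)}{|x|^2}A_{u_n}^2\,dx+\ird\frac{u^2}{|x|^2}\big(A_{u_n}+A_u\big)\big(A_{u_n}-A_u\big)\,dx, \]
and estimate each localized mass by Hölder on the ball $\{|y|\le|x|\}$, which gives the decay $A_u(x)\le C\|u\|_{L^q}^2\,|x|^{2-4/q}$ and thereby absorbs the singular weight $|x|^{-2}$; this is precisely the content of \cite[Lemma~3.2]{byeon}. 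Both integrals then reduce to products of $L^q$-norms of $u_n$, $u$ and of the differences $u_n-u$, so that strong $L^q$-convergence forces $\mathcal{T}(u_n)\to\mathcal{T}(u)$. I expect this weighted estimate near the origin to be the main obstacle, since the $|x|^{-2}$ singularity must be controlled against the nested $L^2$-mass rather than against a pointwise bound on $u$.

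Granted $\mathcal{T}(u_n)\to\mathcal{T}(u)$, the weak lower semicontinuity of $I_\o$ (for $\o>0$) follows term by term. The quadratic part $\tfrac12\ird(|\nabla u|^2+\o u^2)\,dx$ is equivalent to the square of the $\Hr$-norm, hence convex and continuous and therefore weakly lower semicontinuous; the nonlocal term $\tfrac18\mathcal{T}(u)$ converges and so passes to the limit; and since $p+1\in(2,4)$ the compact embedding yields $\ird|u_n|^{p+1}\,dx\to\ird|u|^{p+1}\,dx$, so the last term passes to the limit as well. Summing gives $I_\o(u)\le\liminf_n I_\o(u_n)$.

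For the convergence of the derivatives I would differentiate and treat the four pieces of $I_\o'(u_n)(\vfi)$ separately, for fixed $\vfi\in\Hr$. The terms $\ird\nabla u_n\cdot\nabla\vfi\,dx$ and $\o\ird u_n\vfi\,dx$ converge by testing the weak convergence $u_n\weakto u$ against the fixed functions $\nabla\vfi,\vfi\in L^2$; the nonlinear term converges because $|u_n|^{p-1}u_n\to|u|^{p-1}u$ strongly in $L^{(p+1)/p}$ while $\vfi\in L^{p+1}$; and the derivative of $\mathcal{T}$ is a finite sum of integrals multilinear in $u_n$ and $\vfi$ carrying the weight $|x|^{-2}$ and localized masses $A_{u_n}$, whose convergence follows from the same weighted estimate and strong $L^q$-convergence used in the second paragraph. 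This yields $I_\o'(u_n)(\vfi)\to I_\o'(u)(\vfi)$ and completes the argument.
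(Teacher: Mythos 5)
Your proposal is correct and follows essentially the same route as the paper, which derives Proposition \ref{prop:weak} precisely from the compactness of the embedding $H^1_r(\R^2)\hookrightarrow L^q(\R^2)$, $q\in(2,+\infty)$, together with the weighted estimate of \cite[Lemma~3.2]{byeon}; your decomposition of $\mathcal{T}(u_n)-\mathcal{T}(u)$ and the bound $A_u(x)\le C\|u\|_{L^q}^2|x|^{2-4/q}$ simply make explicit the details the paper leaves to the reader.
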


To finish the account of preliminaries, we now state an inequality
which will prove to be fundamental in our analysis. This
inequality is proved in \cite{byeon}, where also the maximizers are
found.

\begin{proposition}
For any $u \in H^1_r(\R^2)$,
\beq \label{ineq} \int_{\R^2} |u(x)|^4 \, dx \le 2 \left
(\int_{\R^2} |\nabla u(x)|^2 \, dx \right )^{1/2} \left
(\int_{\R^2} \frac{u^2}{|x|^2} \left( \int_0^{|x|} s u^2(s)\, ds
\right )^2 dx \right)^{1/2}. \eeq

\end{proposition}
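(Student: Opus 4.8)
The plan is to pass to radial coordinates and reduce the stated two-dimensional inequality to a one-dimensional estimate involving the auxiliary function $h$. Writing $u=u(r)$ with $r=|x|$ and using $dx=2\pi r\,dr$, the three quantities become
$$\int_{\R^2}|u|^4\,dx=2\pi\int_0^{+\infty}u^4\,r\,dr,\qquad \int_{\R^2}|\nabla u|^2\,dx=2\pi\int_0^{+\infty}(u')^2\,r\,dr,$$
and, since $\int_0^{|x|}s\,u^2(s)\,ds=2h(|x|)$,
$$\int_{\R^2}\frac{u^2}{|x|^2}\left(\int_0^{|x|}s\,u^2(s)\,ds\right)^2dx=8\pi\int_0^{+\infty}\frac{u^2 h^2}{r}\,dr.$$
The key observation is the elementary identity $r\,u^2(r)=2h'(r)$, which follows directly from the definition of $h$ and lets me rewrite the quartic term as $\int_0^{+\infty}u^4\,r\,dr=2\int_0^{+\infty}u^2\,h'\,dr$.

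Next I would integrate by parts. Since $h(0)=0$ and $u^2h\to 0$ as $r\to+\infty$ (the latter because $h$ is bounded by $\tfrac{1}{4\pi}\|u\|_{L^2}^2$ while radial $H^1$ functions vanish at infinity), the boundary terms drop and $\int_0^{+\infty}u^2\,h'\,dr=-2\int_0^{+\infty}u\,u'\,h\,dr$. Hence
$$\int_0^{+\infty}u^4\,r\,dr=-4\int_0^{+\infty}u\,u'\,h\,dr\le 4\int_0^{+\infty}|u|\,|u'|\,h\,dr,$$
using $h\ge 0$.

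Factoring $|u|\,|u'|\,h=\bigl(|u'|\sqrt{r}\bigr)\bigl(|u|\,h/\sqrt{r}\bigr)$ and applying the Cauchy--Schwarz inequality yields
$$\int_0^{+\infty}u^4\,r\,dr\le 4\left(\int_0^{+\infty}(u')^2\,r\,dr\right)^{1/2}\left(\int_0^{+\infty}\frac{u^2 h^2}{r}\,dr\right)^{1/2}.$$
Substituting back the three identities above and tracking the constants, the numerical factors combine as $2\pi\cdot 4\cdot(2\pi)^{-1/2}(8\pi)^{-1/2}=2$, giving precisely the claimed constant.

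The only delicate points are analytic rather than algebraic. To justify the integration by parts and the vanishing of the boundary terms rigorously, I would first prove the inequality for radial $u\in C_c^\infty(\R^2)$ and then extend it to all of $H^1_r(\R^2)$ by density, using the continuity of each term (cf. Proposition \ref{prop:weak}). One should also note that equality in the Cauchy--Schwarz step forces $|u'|\sqrt{r}$ to be proportional to $|u|\,h/\sqrt{r}$, i.e. a first-order ODE for $u$ whose solutions identify the maximizers, consistent with the fact that they are found in \cite{byeon}. I expect the main obstacle to be exactly this boundary-term justification at the origin and at infinity: because $h$ does not decay but tends to a positive constant, the control of $u^2h$ rests entirely on the decay of $u$, so some care with the admissible class of functions is needed.
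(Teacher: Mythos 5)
Your proof is correct, but note that the paper itself does not prove this proposition at all: it is stated with a pointer to the reference \cite{byeon}, where the inequality and its maximizers are established. What you have done is reconstruct, essentially verbatim, the argument behind that cited result: pass to radial coordinates, use $h'(r)=\tfrac12 r u^2(r)$ to rewrite $\int_0^\infty u^4 r\,dr$ as $2\int_0^\infty u^2 h'\,dr$, integrate by parts, and apply Cauchy--Schwarz with the splitting $\bigl(|u'|\sqrt{r}\bigr)\bigl(|u|h/\sqrt{r}\bigr)$. Your constant tracking is right ($2\pi\cdot 4$ against $\sqrt{2\pi}\sqrt{8\pi}\cdot 2=8\pi$, so the factor $2$ is exact), the boundary terms are handled correctly ($h(0)=0$ at the origin; $h$ bounded and $u(r)\le C\|u\|/\sqrt{r}$ by Strauss at infinity), and your plan to first argue for radial $u\in C_c^\infty(\R^2)$ and then pass to $H^1_r(\R^2)$ by density is the right way to make the integration by parts rigorous --- the continuity of the nonlocal term needed for that limit is exactly what Proposition \ref{prop:weak} of the paper provides, and the case where the nonlocal term is infinite is trivial since the inequality then holds automatically. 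Your remark on the equality case (proportionality of $|u'|\sqrt{r}$ and $|u|h/\sqrt{r}$ giving a first-order ODE) is also consistent with how the maximizers are identified in the cited work. So relative to this paper your proposal is not a gap-filling variant of an existing proof but a self-contained substitute for an external citation, which is a net gain in completeness.
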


As commented in the introduction, this paper is concerned with
boundedness from below of $I_\o$. Let us give a rough idea of the
arguments of our proof. First of all, consider $u(r)$ a fixed
function, and define $u_\rho(r)= u(r-\rho)$. Let us now estimate
$I_\o(u_\rho)$ as $\rho \to +\infty$.
\begin{align*}
(2\pi)^{-1} I_\o(u_\rho) &= \dis \frac 12 \int_{-\rho}^{+\infty} (|u'|^2
+ \o u^2)(r+\rho) \, dr
\\
& \quad+\dis \frac{1}{8} \int_{-\rho}^{\infty}
\frac{u^2(r)}{r+\rho}\left(\int_{-\rho}^r (s+\rho) u^2(s) \,
ds\right)^2 dr  - \dis \frac{1}{p+1}  \int_{-\rho}^{\infty}
|u|^{p+1}(r+\rho) \, dr.
\end{align*}

We estimate the above expression by simply replacing the
expressions $(r+\rho)$, $(s+\rho)$ with the constant $\rho$:
$$(2\pi)^{-1} I_\o(u)  $$$$\sim  \dis \rho \left [ \frac 12 \ir (|u|'^2
+ \o u^2) \, dr
 +\dis \frac{1}{8} \ir
u^2(r) \left(\int_{-\infty}^r  u^2(s) \, ds\right)^2 dr  - \dis
\frac{1}{p+1}  \ir |u|^{p+1} \, dr \right ]$$
$$ =\rho \left [ \frac 12 \ir (|u|'^2
+ \o u^2) \, dr
 +\dis \frac{1}{24} \left ( \ir
u^2 dr \right)^3  - \dis \frac{1}{p+1}  \ir |u|^{p+1} \, dr \right
]. $$

This estimate will be made rigorous in Lemma \ref{le:asin}.
Therefore, it is natural to define the limit functional $J_\o:
H^1(\R) \to \R$,
$$J_\o(u)= \frac 12 \ir \left(|u'|^2
+ \o u^2\right)  dr
 +\dis \frac{1}{24} \left ( \ir
u^2 dr \right)^3  - \dis \frac{1}{p+1}  \ir |u|^{p+1} \, dr.
$$

As a consequence of the above argument, if $J_\o$ attains negative
values, then $I_\o$ will be unbounded from below.

\medskip

The reverse is also true, but the proof is more delicate. We will
show that if $u_n$ is unbounded in $H_r^1(\R^2)$ and $I_\o(u_n)$
is bounded from above, then somehow $u_n$ contains a certain mass
spreading to infinity, as $u_\rho$ does. This will be made
explicit in Proposition \ref{prop-fund}. But this will lead us to
a contradiction if $J_\o$ is positive on that mass. The proof of
this argument is however far from trivial, and is the core of this
paper.

Summing up, we are able to relate $I_\o$ with the limit functional
$J_\o$ in the following way:
$$ \inf I_\o > -\infty \ \Leftrightarrow \ \inf J_\o =0.$$
Moreover this characterization will give us the threshold value for
$\o$, since the critical points of $J_\o$ can be found explicitly,
as will be shown in next Section.

\section{The limit problem}

In this section we deal with the limit functional $J_\o: H^1(\R)
\to \R$,
\beq \label{def J} J_{\omega}(u)= \frac 1 2  \ir \left ( |u'|^2 +
\omega u^2  \right )  dr + \frac{1}{24} \left ( \ir u^2 \, dr
\right)^3 - \frac{1}{p+1} \ir |u|^{p+1}\, dr. \eeq

Clearly, the Euler-Lagrange equation of \eqref{def J} is the
following problem: \beq \label{limit}
 -u'' + \omega u + \frac 1 4 \left ( \ir
u^2(s)\, ds \right )^2 u = |u|^{p-1} u, \quad \hbox{in }\R. \eeq

As we shall see later, we will find the explicit solutions of
\eqref{limit} later. But, first, let us study it from a
variational point of view: this study will give us some further
information on the solutions.

\

Before going on, we need a technical result, which is stated in
next lemma. We think that such result must be well-known, but we
have not been able to find a explicit reference.

\begin{lemma} \label{lemmino} Let $u_n \in H^1(\R)$ a sequence of even non-negative functions
which are decreasing in $r>0$, and assume that $u_n \weakto u_0$ weakly in $H^1(\R)$.
Then $u_0$ is also even, non-negative and decreasing in $r>0$, and
$u_n \to u_0$ in $L^q(\R)$ for any $q \in (2,+\infty)$.
\end{lemma}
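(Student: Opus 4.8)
The plan is to exploit the special structure of the sequence—even, non-negative, and monotone decreasing on $r>0$—to upgrade the weak convergence $u_n \weakto u_0$ in $H^1(\R)$ to strong convergence in $L^q(\R)$ for $q \in (2,+\infty)$. The first step is to establish that the limit $u_0$ inherits the qualitative properties. Evenness and non-negativity pass to the weak limit because each is a closed convex condition (the set of even functions is a closed subspace, and $\{u \geqslant 0\}$ is closed and convex), hence weakly closed; monotonicity in $r>0$ follows because $u_n(r) \leqslant u_n(r')$ for $r \geqslant r' > 0$ is preserved under the pointwise a.e.\ convergence that one extracts along a subsequence from the $H^1$-bounded family (and $H^1(\R) \hookrightarrow C(\R)$ locally, so one even has pointwise convergence everywhere after taking a subsequence).

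The heart of the argument is a uniform decay estimate coming from monotonicity, which I would use to rule out loss of mass at infinity. The key observation is that for a non-negative even function decreasing in $r>0$, one has the Strauss-type bound controlling the tail: for $r>0$,
\[
r\, u_n^2(r) \leqslant \int_0^r u_n^2(s)\,ds \leqslant \tfrac{1}{2}\|u_n\|_{L^2(\R)}^2 \leqslant C,
\]
using that $u_n$ is decreasing so $u_n^2(r) \leqslant \frac{1}{r}\int_0^r u_n^2$. This gives the pointwise decay $u_n(r) \leqslant C\, |r|^{-1/2}$ \emph{uniformly in $n$}. I would then split $\|u_n - u_0\|_{L^q}^q$ into an integral over $|r| \leqslant R$ and over $|r|>R$. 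On the bounded interval, the compact embedding $H^1(-R,R) \hookrightarrow L^q(-R,R)$ (Rellich) gives strong convergence, so that piece tends to $0$ for fixed $R$. On the tail, the uniform decay bound yields, for $q>2$,
\[
\int_{|r|>R} |u_n|^q \,dr \leqslant (C R^{-1/2})^{q-2} \int_{|r|>R} u_n^2 \,dr \leqslant C' R^{-(q-2)/2},
\]
which is small uniformly in $n$ when $R$ is large; the same bound applies to $u_0$. Combining the two pieces via an $\e/3$ argument completes the strong $L^q$ convergence.

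The main obstacle I anticipate is handling the endpoint behavior at $q$ near $2$ and the dependence of the tail estimate on $q$: the decay exponent $(q-2)/2$ degenerates as $q \to 2^+$, so the tail bound does not give convergence in $L^2$ (indeed mass may escape to infinity in $L^2$, consistent with why the lemma excludes $q=2$). One must therefore be careful to fix $q>2$ throughout and not attempt a uniform-in-$q$ argument down to $2$. A secondary technical point is justifying that the monotonicity and the uniform pointwise decay bound are genuinely uniform in $n$—this rests only on the uniform $H^1$-bound (hence uniform $L^2$-bound) provided by weak convergence, so it is routine but should be stated explicitly. Once the uniform tail smallness is in place, the remainder is a standard splitting argument, and no further surprises should arise.
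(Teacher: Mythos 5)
Your proof is correct and follows essentially the same route as the paper: both establish that $u_0$ inherits the symmetry/monotonicity properties (the paper via the closed convex set of such functions, you via convexity plus local uniform convergence), both derive the same uniform Strauss-type decay $u_n(r)\le C|r|^{-1/2}$ from monotonicity and the $L^2$ bound, and both conclude by splitting into a compact part handled by Rellich--Kondrachov and a tail made small by that decay, which is exactly where $q>2$ is needed.
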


\begin{proof}
Observe that the set $A=\{ u \in H^1(\R) \ \mbox{nonnegative, even
and decreasing in r}>0\}$ is a closed and convex subset of
$H^1(\R)$. As a consequence, $u_0 \in A$.

Then, for any $r\in \R$, $r\neq 0$,
$$
C \ge \left|\int_0^r u_n^2(s) \, ds\right| \ge u_n^2(r) |r| \Rightarrow u_n(r)
\le \frac{C}{\sqrt{|r|}},
$$
and the same estimate works for $u_0$. With this inequality, we can estimate:
\begin{align*}
 \ir |u_n-u_0|^q \, dr &\le \int_{-R}^R |u_n-u_0|^q \, dr + 2 C \int_{|r|>R} r^{-q/2}\, dr
 \\
&= \int_{-R}^R |u_n-u_0|^q \, dr + 4 C \frac{2}{2-q}
R^{\frac{2-q}{2}}.
\end{align*}
Take into account that, by Rellich-Kondrachov Theorem, $u_n \to
u_0 $ in $L^q(-R, R)$ for any $R>0$ fixed. Then, the above
inequality implies that $u_n \to u_0 $ in $L^q(\R)$.

\end{proof}

Some of the properties of the functional $J_{\o}$ are discussed
below:

\begin{proposition} \label{pr: limit1} Consider the functional $J_{\omega}$ with $p\in (1,3)$ and
$\omega > 0$. The following properties hold:

\begin{enumerate}
\item[a)] $J_{\o}$ is coercive and attains its infimum.

\item[b)] $0$ is a local minimum of $J_{\o}$. Indeed,
there exists $r_0>0$ with the following property:

\medskip \noindent for any $r \in (0, r_0)$, there exists $\alpha>0$ satisfying
that $J_{\o}(u)>\alpha$, for any $u \in H^1(\R)$ with $\|u\|_{H^1(\R)}=r$.

\item[c)] There exists $\omega_0>0$ such that $\min J_{\omega} <0$
if and only if $\omega \in [0, \omega_0)$.
\end{enumerate}

\end{proposition}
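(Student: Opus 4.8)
The plan is to prove the three parts in order, exploiting the one-dimensional structure of $J_\o$ and the scaling behavior of its three terms. For part (a), coercivity, I would first observe that the delicate point is controlling the $L^{p+1}$ term by the other two. Since $p\in(1,3)$, the nonlocal term $\frac{1}{24}(\int u^2)^3$ is of higher homogeneity in $L^2$-mass than the local term. The key inequality to establish is a Gagliardo–Nirenberg-type bound in dimension one: $\|u\|_{p+1}^{p+1} \le C \|u'\|_2^{\frac{p-1}{2}} \|u\|_2^{\frac{p+3}{2}}$. Then Young's inequality splits the right-hand side so that the $\|u'\|_2^2$ factor is absorbed into the kinetic energy and the $\|u\|_2$ factor is dominated, for large mass, by the cubic nonlocal term and, for small mass, by the quadratic term $\frac{\o}{2}\|u\|_2^2$. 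This shows $J_\o(u)\to+\infty$ as $\|u\|_{H^1(\R)}\to\infty$. Attainment of the infimum then follows by the direct method: take a minimizing sequence, symmetrize via Schwarz rearrangement (which decreases the kinetic and preserves the other two terms), so we may assume the sequence is even, non-negative and decreasing; coercivity gives boundedness in $H^1(\R)$, hence a weakly convergent subsequence, and Lemma~\ref{lemmino} gives strong $L^q$ convergence for $q\in(2,+\infty)$, in particular for $q=p+1$ and (after an extra argument or using the same lemma's decay estimate) for $q$ governing the $L^2$-mass in the nonlocal term; weak lower semicontinuity of the kinetic term and the $L^2$ term then delivers $J_\o(u_0)\le \liminf J_\o(u_n)=\inf J_\o$.

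For part (b), that $0$ is a local minimum, the plan is again Gagliardo–Nirenberg. On a small sphere $\|u\|_{H^1(\R)}=r$ we have $\|u\|_2^2 \le r^2$ and $\|u'\|_2^2 \le r^2$, and the nonlocal term is non-negative, so
\[
J_\o(u) \ge \tfrac12\|u'\|_2^2 + \tfrac{\o}{2}\|u\|_2^2 - \tfrac{1}{p+1}\|u\|_{p+1}^{p+1} \ge c\, r^2 - C\, r^{p+1}.
\]
Since $p>1$ the term $r^{p+1}$ is of higher order than $r^2$, so for $r$ small enough the right-hand side is bounded below by a positive constant $\alpha$, uniformly over the sphere. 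Choosing $r_0$ so that $c\,r^2 - C\,r^{p+1}>0$ for $r\in(0,r_0)$ gives the claim.

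Part (c) is the heart of the matter and where the explicit threshold $\o_0$ must emerge. The strategy is to exploit the exact scaling of $J_\o$. I would parametrize $J_\o$ over the direction/shape of $u$ and over dilations, reducing the question $\min J_\o<0$ to a one-parameter minimization whose sign depends monotonically on $\o$. Concretely, fixing a profile and rescaling amplitude and width, the three terms scale with different powers, so $J_\o$ restricted to such a scaling family becomes an explicit function of $\o$ and the scaling parameters whose minimum can be computed. The monotonicity in $\o$ is immediate since $\o$ enters only through the term $\frac{\o}{2}\|u\|_2^2 \ge 0$, so $\o\mapsto \min J_\o$ is non-decreasing and continuous; together with $\min J_0<0$ (which follows by testing on a suitable scaled profile, since at $\o=0$ the cubic nonlocal term is dominated by the local nonlinearity in an appropriate regime) and $\min J_\o\to+\infty$ (or at least $\ge 0$) as $\o\to+\infty$, this produces a unique threshold $\o_0$ with $\min J_\o<0 \iff \o\in[0,\o_0)$. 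The main obstacle I anticipate is pinning down the \emph{explicit} value of $\o_0$ as in \eqref{w0 bis}: this requires solving the Euler–Lagrange equation \eqref{limit} explicitly. Here I would use that for the minimizer the nonlocal coefficient $\frac14(\int u^2)^2$ acts as an effective constant $\mu$, so \eqref{limit} reduces to the classical ODE $-u''+(\o+\mu)u=|u|^{p-1}u$, whose positive even solution is the standard soliton $u(r)=\bigl[\tfrac{p+1}{2}(\o+\mu)\bigr]^{1/(p-1)}\operatorname{sech}^{2/(p-1)}\!\bigl(\tfrac{p-1}{2}\sqrt{\o+\mu}\,r\bigr)$. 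Substituting this back, the self-consistency condition relating $\mu$ to $\int u^2$ (which is where the constant $m$ appears) plus the condition $\min J_\o=0$ yields a system that solves explicitly for $\o_0$, giving formula \eqref{w0 bis}. Verifying that this critical point is indeed the global minimizer, and that the sign change of its energy occurs exactly at \eqref{w0 bis}, is the computational crux of the proof.
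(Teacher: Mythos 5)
Your proposal follows essentially the same route as the paper for all three parts: Gagliardo--Nirenberg plus Young's inequality for coercivity; Schwarz symmetrization, Lemma \ref{lemmino} and weak lower semicontinuity for attainment of the infimum; the Sobolev embedding for b); and, for c), the non-decreasing continuous map $\phi(\omega)=\min J_\omega$ together with the rescaling $u_\lambda(r)=\lambda^{2/(p-1)}u(\lambda r)$ to show $\phi(0)<0$.

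There is, however, one step you assert without justification, and it is the only place where an actual argument is required: the claim that $\min J_\omega\ge 0$ for $\omega$ large. Note first that your primary phrasing, $\min J_\omega\to+\infty$, is impossible, since $\min J_\omega\le J_\omega(0)=0$ for every $\omega$; only the hedged version $\min J_\omega=0$ can hold. Moreover this cannot be obtained from the mass term alone: a pointwise inequality $\frac{\omega}{2}t^2\ge\frac{1}{p+1}t^{p+1}$ fails for large $t$ no matter how large $\omega$ is, because $p+1>2$. The paper's device is to retain a quartic term from the Gagliardo--Nirenberg/Young step, namely $J_\omega(u)\ge\int_{\R}\big(\frac{\omega}{2}u^2+c\,u^4-\frac{1}{p+1}|u|^{p+1}\big)\,dr$, and then to use that $\frac{\omega}{2}t^2+c\,t^4-\frac{1}{p+1}t^{p+1}\ge 0$ pointwise once $\omega$ is large, which works precisely because $p+1<4$. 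Alternatively, within your own splitting it suffices to observe that Young's inequality gives $\frac{1}{p+1}\|u\|_{p+1}^{p+1}\le\varepsilon\|u'\|_2^2+\delta\|u\|_2^6+C(\varepsilon,\delta)\|u\|_2^2$ with $C(\varepsilon,\delta)$ independent of $\omega$, so that $\omega\ge 2C(\varepsilon,\delta)$ forces $J_\omega\ge 0$; either way, the point must be made explicitly. Finally, the long program you outline for computing the explicit value of $\omega_0$ (treating $\frac14(\int u^2)^2$ as an effective constant, the soliton formula, the self-consistency relation involving $m$) is not part of this proposition, which only asserts existence of $\omega_0>0$; the paper carries out exactly that program later, in Propositions \ref{explicit} and \ref{gluing}, where formula \eqref{w0} is derived. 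Your sketch there is consistent with the paper's method, but it answers a different statement.
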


\begin{proof}

{\bf Proof of a)} To prove coercivity, we use Gagliardo-Nirenberg
inequality:
$$ \|u\|_{L^4} \le C \| u' \|_{L^2}^{1/4} \|u\|_{L^2}^{3/4}. $$
Hence
$$ \ir u^4\, dr \le \frac C 2 \left [ \ir |u' |^2\, dr + \left ( \ir u^2\, dr \right )^3 \right ]. $$
Then,
\beq J_{\o}(u) \label{pu} \ge \frac 1 4  \ir |u'|^2 \, dr +
\frac{1}{48} \left ( \ir u^2 \, dr \right)^3  + c \ir u^4\,
dr- \frac{1}{p+1} \ir |u|^{p+1}\, dr. \eeq

Observe that for any $C>0$ we can choose $D>0$ so that $t^3 \ge C
t - D$ for every $t \ge 0$. Applying this with $t = \ir u^2\,
dr$ into \eqref{pu}, and renaming $C$, we obtain:
$$J_{\o}(u) \ge \frac 1 4 \ir | u'|^2 \, dr + \ir \Big( C u^2 + c  u^4 -
\frac{1}{p+1} |u|^{p+1} \Big) dr - D.$$

Now, it suffices to take $C$ so that the function $C u^2+ c  u^4
-\frac{1}{p+1} |u|^{p+1} \ge 0$ for any $u \in \R$.

Take now $u_n$ such that $J_\o(u_n) \to \inf J_\o$. From the
coercivity, it follows that $u_n$ is bounded. Consider now the
sequence $v_n=|u_n|^*$ of non-negative symmetrized functions.
Clearly, $v_n$ is also bounded, and it is easy to observe that
$\inf J_\o \le J_\o(v_n) \le J_\o(u_n) \to \inf J_\o$.

Assume, passing to a subsequence, that $v_n \weakto v$ weakly in $H^1(\R)$. By Lemma
\ref{lemmino}, $v_n \to v$ in $L^{p+1}(\R)$. The weak lower
semicontinuity of the norm allows us to conclude that $u$ is a
minimizer of $J_{\o}$.

\medskip {\bf Proof of b)} This is quite standard. Indeed, by using Sobolev inequality,
$$J_\o(u) \ge \frac{1}{2} \min \{1,\omega \} \| u\|_{H^1(\R)} ^2 - C \|u\|_{H^1(\R)}^{p+1}.$$

\medskip {\bf Proof of c)} Let us define the map $\phi: [0, + \infty) \to
\R$, $\phi(\omega) = \min J_{\omega}$. It is easy to check that
$\phi$ is increasing and continuous. Moreover, $\phi(\omega) \le
0 $ for all $\omega$ (observe that  $J_{\o}(0)=0$).

We claim that $\phi(\omega)=0$ for large $\omega$. Indeed, by the
same arguments of the proof of {\bf a)}:
$$J_{\o}(u) \ge  \ir \left(\frac{\omega}{2} u^2 + c u^4 -
\frac{1}{p+1} |u|^{p+1}\right) dr.$$

For $\o$ sufficiently large, $\frac{\omega}{2} u^2 + c u^4 -
\frac{1}{p+1} |u|^{p+1}\ge 0$ for any $u\in \R$. Then $J_{\o}(u) \ge
0$ for any $u \in H^1(\R)$, proving the claim.

\bigskip

We now show that $\phi(0)<0$. On that purpose, fix $u \in H^1(\R)$
and define $u_{\lambda}(r)= \lambda^{\frac{2}{p-1}} u(\lambda r)$.
There holds:
$$J_0(u_\l)= \frac 1 2  \l^{\frac{p+3}{p-1}}  \ir |u'|^2 \, dr +
\frac{1}{24} \l^{\frac{3(5-p)}{p-1}} \left ( \ir u^2 \, dr
\right)^3 - \frac{1}{p+1} \l^{\frac{p+3}{p-1}} \ir |u|^{p+1}\,
dr.$$
Therefore, for $\l$ sufficiently small, $J_0(u_\l)$ has the sign
of the term
$$
\frac 1 2  \ir |u'|^2 \, dr - \frac{1}{p+1} \ir |u|^{p+1}\, dr.
$$
It suffices to take $u$ such that this
quantity is negative to conclude.

So, we can define $\omega_0= \min \{ \omega \ge 0: \ \phi(\omega)
= 0\}>0$.

\end{proof}

As a consequence of the previous result, for $\omega \in [0,
\omega_0)$ there exists a nontrivial solution for \eqref{limit},
which corresponds to a global minimum of $J_\o$. As announced in
the introduction, the expression for $\o_0$ will found later on.

\

We now pass to finding the explicit solutions of problem
\eqref{limit}. For any $k>0$ we denote by $w_k \in H^1(\R)$ the
unique positive radial solution of:
\beq \label{wk}
-w_k'' + k w_k = w_k^p, \quad \hbox{in } \R.
\eeq

Let us state some well-known properties of this equation. First,
the Hamiltonian of $w_k$ is equal to $0$, that is,

\beq \label{hamiltonian}
 - \frac 1 2 |w_k'(r)|^2 + \frac k 2
w_k^2(r) - \frac{1}{p+1} w_k^{p+1}(r)=0, \ \hbox{ for all } r \in
\R. \eeq

It is also known that any solution of \eqref{wk} is of the form
$u(x)= \pm w_k(x-y)$, for some $y\in \R$. Moreover, \beq
\label{scaling} w_k(r)= k^{\frac{1}{p-1}}w_1(\sqrt{k}r), \quad
\hbox{where} \quad w_1(r)= \left ( \frac{2}{p+1} \cosh^2 \left
(\frac{p-1}{2} r \right) \right)^{\frac{1}{1-p}}. \eeq

In what follows we define
\[  \label{defm}
m= \ir w_1^2\,dr.
\]

The following relations are also well known, and can be deduced
from \eqref{hamiltonian}: \beq \label{relations} \ir |w_1'|^2\, dr
= \frac{p-1}{p+3} m, \qquad \ir w_1^{p+1}\, dr =
\frac{2(p+1)}{p+3} m. \eeq

\begin{proposition} \label{explicit} Let us consider the equation:
\beq \label{eq-k} k= \o + \frac 1 4 m^2 k^{\frac{5-p}{p-1}},\ k>0.
\eeq

Then, $u$ is a nontrivial solution of \eqref{limit} if and only if
$u(r)= w_k(r-\xi)$ for some $\xi \in \R$ and $k$ a root of
\eqref{eq-k}.

Define:
\beq \label{w1} \omega_1= \left( \frac{(5-p)m^2}{4(p-1)}  \right
)^{-\frac{p-1}{2(3-p)}} - \frac{m^2}{4} \left(
\frac{(5-p)m^2}{4(p-1)}  \right )^{-\frac{(5-p)}{2(3-p)}}.\eeq

The following holds:
\begin{enumerate}
\item if $\o > \o_1$, equation \eqref{eq-k} has no solution and
there is no nontrivial solution of \eqref{limit};
\item if $\o =\o_1$, equation \eqref{eq-k} has only one solution
$k_0$ and $w_{k_0}(r)$ is the only non-trivial solution of
\eqref{limit} (apart from translations);
\item if $\o \in (0, \o_1)$, equation \eqref{eq-k} has two
solutions $k_1(\o)<k_2(\o)$ and $w_{k_1}(r),w_{k_2}(r)$
are the only two non-trivial solutions of \eqref{limit} (apart
from translations).

\end{enumerate}

\end{proposition}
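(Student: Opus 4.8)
The plan is to reduce the nonlocal equation \eqref{limit} to the local equation \eqref{wk} by freezing the nonlocal coefficient, and then to impose a self-consistency condition which turns out to be precisely \eqref{eq-k}.

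First I would argue as follows. Let $u$ be a nontrivial solution of \eqref{limit} and set $N = \ir u^2\, ds > 0$. Then $u$ solves the local problem $-u'' + k u = |u|^{p-1} u$ with the constant $k = \o + \frac14 N^2$; since $\o > 0$ we have $k > 0$. By the quoted classification of solutions of \eqref{wk}, necessarily $u(r) = \pm w_k(r - \xi)$ for some $\xi \in \R$. It remains to identify the admissible values of $k$. Using the scaling \eqref{scaling} and the definition of $m$, a direct substitution gives $\ir w_k^2\, dr = m\, k^{\frac{5-p}{2(p-1)}}$, hence $N^2 = m^2 k^{\frac{5-p}{p-1}}$. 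Plugging this into $k = \o + \frac14 N^2$ yields exactly \eqref{eq-k}. Conversely, every root $k>0$ of \eqref{eq-k} makes $w_k(\cdot - \xi)$ a genuine solution of \eqref{limit}, since for such $k$ the frozen coefficient $\frac14\left(\ir w_k^2\right)^2$ equals $k - \o$. This establishes the ``if and only if'' statement and reduces the count of solutions, up to translation, to the count of positive roots of \eqref{eq-k}.

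Next I would study the roots of \eqref{eq-k} by writing it as $g(k) = \o$, where $g(k) = k - \frac14 m^2 k^{\alpha}$ and $\alpha = \frac{5-p}{p-1}$. The crucial observation is that $p \in (1,3)$ forces $\alpha > 1$ (indeed $\alpha > 1 \iff p < 3$). Therefore $g(0)=0$, $g(k) \to -\infty$ as $k \to +\infty$, and $g'(k) = 1 - \frac{\alpha}{4} m^2 k^{\alpha - 1}$ vanishes at the single point $k_* = \left(\frac{4}{\alpha m^2}\right)^{\frac{1}{\alpha - 1}}$, being positive on $(0, k_*)$ and negative on $(k_*, +\infty)$. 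Thus $g$ increases from $0$ to its unique maximum $g(k_*) > 0$ and then decreases strictly to $-\infty$. Setting $\o_1 := g(k_*)$, the horizontal line at height $\o > 0$ meets the graph of $g$ in: no point if $\o > \o_1$; exactly one point ($k_*$) if $\o = \o_1$; and exactly two points $k_1(\o) < k_* < k_2(\o)$ if $\o \in (0, \o_1)$. Combined with the previous paragraph this yields the three alternatives in the statement.

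Finally I would verify that $g(k_*)$ is precisely the quantity \eqref{w1}. Using $\alpha - 1 = \frac{2(3-p)}{p-1}$ one finds $k_* = \left(\frac{(5-p)m^2}{4(p-1)}\right)^{-\frac{p-1}{2(3-p)}}$ and $k_*^{\alpha} = \left(\frac{(5-p)m^2}{4(p-1)}\right)^{-\frac{5-p}{2(3-p)}}$, so that $\o_1 = k_* - \frac{m^2}{4} k_*^{\alpha}$ reproduces \eqref{w1} after simplification. This last identification is the step demanding most care, since it is purely a matter of tracking the fractional exponents correctly; the rest is a soft reduction together with an elementary one-variable shape analysis, so I do not anticipate any genuine conceptual obstacle.
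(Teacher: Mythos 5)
Your proposal is correct and follows essentially the same route as the paper: reduce \eqref{limit} to the local problem \eqref{wk} via the self-consistency condition $k=\o+\frac14\left(\ir u^2\,dr\right)^2$, use the scaling \eqref{scaling} to obtain \eqref{eq-k}, and then count roots by an elementary one-variable analysis (your study of $g(k)=k-\frac14 m^2k^{\alpha}$ and its maximum $g(k_*)$ is the same computation as the paper's convexity argument plus the degenerate-zero system defining $\o_1$). The explicit value you obtain for $k_*$ and $g(k_*)$ matches \eqref{w1} exactly.
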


\begin{proof}

Let $u$ be a nontrivial solution of \eqref{limit}, and define $k=
\omega + \frac 1 4 \left( \ir u^2\, dr\right )^2$. Then, $u$ is
a solution of $-u'' + k u = u^p$, so $u(r)= w_k(r-\xi)$ for some $\xi
\in \R$. By using \eqref{scaling}, we obtain:
$$ k= \omega + \frac 1 4 \left( \ir w_k^2(r)\, dr\right )^2=
\omega + \frac 1 4 k^{\frac{4}{p-1}} \left( \ir w_1^2(\sqrt{k}r)\,
dr\right )^2.$$
A change of variables leads us to equation \eqref{eq-k}.

Moreover,
$$ 1<p<3 \Rightarrow \frac{5-p}{p-1}>1.$$
Therefore, the function $(0,+\infty) \ni k \mapsto
k^{\frac{5-p}{p-1}}$ is convex. Therefore, there exists
$\omega_1>0$ with the properties indicated.

In order to get the exact value of $\o_1$, observe that the
function $k \mapsto  \o_1 + \frac 1 4 m^2 k^{\frac{5-p}{p-1}}-k$
has a degenerate $0$. Then, $\o_1$ solves the system:
$$ \left \{ \begin{array}{l}  \o + \displaystyle \frac{m^2}{4} k^{\frac{5-p}{p-1}}=k,
\\ \displaystyle \frac{5-p}{4(p-1)} m^2 k^{\frac{5-p}{p-1}-1}=1.
\end{array} \right.$$

From this one obtains formula \eqref{w1}.
\end{proof}

In our next result, we obtain information from Proposition
\ref{explicit}.

\begin{proposition} \label{gluing} Let $\o_0, \ \o_1$ be the values defined in
Propositions \ref{pr: limit1} and \ref{explicit} . Then:

\begin{enumerate}
\item $\o_0 < \o_1$, and $\o_0$ has the expression: \beq
\label{w0} \omega_0= \frac{3-p}{3+p}\ 3^{\frac{p-1}{2(3-p)}}\
2^{\frac{2}{3-p}} \left(
\frac{m^2(3+p)}{p-1}\right)^{-\frac{p-1}{2(3-p)}}, \eeq
where $m$ is as in \eqref{defm}.
\item For any $\o \in (0,\o_1)$, $J_\o(w_{k_1})>J_\o(w_{k_2})$. In
particular, for any $\o \in (0,\o_0)$, $w_{k_2}$ is a global
minimizer of $J_{\o}$.

\end{enumerate}

\end{proposition}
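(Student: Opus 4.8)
The plan is to evaluate $J_\o$ explicitly on the solution family $w_k$ furnished by Proposition \ref{explicit}, and then to read off both the threshold $\o_0$ and the energy comparison from the resulting one-variable expression. First I would compute $J_\o(w_k)$ for $k$ a root of \eqref{eq-k}. From the scaling \eqref{scaling} one gets $\ir w_k^2 = m\,k^{\frac{5-p}{2(p-1)}}$, while the Nehari identity together with the vanishing Hamiltonian \eqref{hamiltonian} (equivalently, relations \eqref{relations}) give $\ir|w_k'|^2 = \frac{p-1}{p+3}\,k\ir w_k^2$ and $\ir w_k^{p+1} = \frac{2(p+1)}{p+3}\,k\ir w_k^2$. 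Substituting these, and using \eqref{eq-k} in the form $\o = k - \frac14\big(\ir w_k^2\big)^2$ to eliminate $\o$, the four terms of $J_\o$ collapse to
$$J_\o(w_k)= \frac{p-1}{p+3}\,m\,k^{\frac{p+3}{2(p-1)}}-\frac{1}{12}\,m^3\,k^{\frac{3(5-p)}{2(p-1)}} = \frac{1}{3}\Big(\ir w_k^2\Big)\Big(\o-\tfrac{2(3-p)}{p+3}\,k\Big).$$
In particular $J_\o(w_k)<0$ if and only if $k>k^\ast:=\big(\tfrac{12(p-1)}{m^2(p+3)}\big)^{\frac{p-1}{2(3-p)}}$, since the exponent $\tfrac{2(3-p)}{p-1}$ is positive for $p\in(1,3)$.

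To pin down $\o_0$, recall $J_\o(0)=0$ and that, by Proposition \ref{pr: limit1}a), $J_\o$ attains its infimum; hence $\min J_\o<0$ precisely when some nontrivial critical point has negative energy, i.e. (by Proposition \ref{explicit}) when $k_2(\o)>k^\ast$, where $k_2$ is the larger root of \eqref{eq-k}. Writing \eqref{eq-k} as $\o=g(k)$ with $g(k)=k-\tfrac14 m^2 k^{\frac{5-p}{p-1}}$, the map $g$ increases up to its unique maximum at $k_0$ (the degenerate root of Proposition \ref{explicit}, where $\o=\o_1$) and decreases afterwards, so $k_2(\o)$ is decreasing in $\o$. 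Thus $k_2(\o)>k^\ast$ holds exactly for $\o<g(k^\ast)$, and therefore $\o_0=g(k^\ast)$. Using $\tfrac{5-p}{p-1}=1+\tfrac{2(3-p)}{p-1}$ one has $k^{\ast\,\frac{5-p}{p-1}}=k^\ast\,\tfrac{12(p-1)}{m^2(p+3)}$, whence $\tfrac14 m^2 k^{\ast\,\frac{5-p}{p-1}}=\tfrac{3(p-1)}{p+3}k^\ast$ and so $\o_0=g(k^\ast)=\tfrac{2(3-p)}{p+3}k^\ast$; substituting $k^\ast$ and writing $12=2^2\cdot3$ yields exactly \eqref{w0}. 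Finally, comparing exponents gives $k^\ast>k_0$ (this reduces to $p+3<3(5-p)$, true for $p<3$), and since $g$ is decreasing beyond $k_0$ we get $\o_0=g(k^\ast)<g(k_0)=\o_1$.

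For the energy comparison I would argue by differentiation in $\o$ rather than comparing $k_1,k_2$ directly. On each branch $k_i(\o)$ the function $w_{k_i(\o)}$ is a critical point of $J_\o$, so by the envelope identity
$$\frac{d}{d\o}J_\o(w_{k_i(\o)})=\partial_\o J_\o(w_{k_i(\o)})=\tfrac12\ir w_{k_i}^2=\tfrac{m}{2}\,k_i^{\frac{5-p}{2(p-1)}},$$
the variation of $w_{k_i}$ dropping out because $J_\o'(w_{k_i})=0$. Hence $D(\o):=J_\o(w_{k_2})-J_\o(w_{k_1})$ satisfies $D'(\o)=\tfrac{m}{2}\big(k_2^{\frac{5-p}{2(p-1)}}-k_1^{\frac{5-p}{2(p-1)}}\big)>0$, since $k_2>k_1$. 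As $\o\to\o_1^-$ the two roots merge ($k_1,k_2\to k_0$), so $D(\o_1^-)=0$; therefore $D(\o)<0$ for every $\o\in(0,\o_1)$, which is the claim $J_\o(w_{k_1})>J_\o(w_{k_2})$. Combined with Proposition \ref{pr: limit1}, for $\o\in(0,\o_0)$ the infimum of $J_\o$ is negative and attained, so the minimizer is a nontrivial critical point, necessarily a translate of $w_{k_1}$ or $w_{k_2}$; since $w_{k_2}$ has the lower energy, $w_{k_2}$ is the global minimizer.

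The main obstacle is this second part: a head-on comparison of $J_\o(w_{k_1})$ and $J_\o(w_{k_2})$ through the explicit formula is awkward because $J_\o(w_k)$ is not monotone in $k$ (it increases up to $k_0$ and decreases afterwards, with $k_1<k_0<k_2$). The device that removes this difficulty is the envelope identity above, which turns the comparison into the elementary monotonicity statement $D'(\o)>0$ together with the boundary value $D(\o_1)=0$. The computation in the first step, though routine, must also be carried out carefully: it is precisely the cancellation producing the compact formula for $J_\o(w_k)$ that ultimately makes $\o_0$ explicit.
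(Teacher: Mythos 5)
Your proposal is correct, and its first half is essentially the paper's own argument: you restrict $J_\o$ to the curve $k\mapsto w_k$, use the scaling \eqref{scaling} and the identities \eqref{relations} to reduce everything to one-variable expressions, and determine $\o_0$ from the same two-equation system the paper solves (your pair ``$\o=g(k)$ and vanishing reduced energy at $k^*$'' is exactly the paper's system, organized by eliminating $\o$ first); your extra checks that $k^*>k_0$ and that $k_2(\o)$ is decreasing even give the strict inequality $\o_0<\o_1$ more explicitly than the paper, which essentially just asserts it. Where you genuinely diverge is part (2). The paper fixes $\o$ and studies $\psi(k)=J_\o(w_k)$ as a function of $k$: since $\psi$ is increasing near $0$ and near $+\infty$ and its only critical points are $k_1<k_2$, the first is a local maximum and the second a local minimum, so $\psi$ decreases on $(k_1,k_2)$ and $J_\o(w_{k_1})>J_\o(w_{k_2})$ follows at once. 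You instead differentiate along the two branches in $\o$ (envelope identity), getting $D'(\o)=\frac{m}{2}\bigl(k_2^{\frac{5-p}{2(p-1)}}-k_1^{\frac{5-p}{2(p-1)}}\bigr)>0$ together with $D(\o_1^-)=0$ from the merging of the roots. Both are sound, but note that the ``obstacle'' you describe (non-monotonicity of the reduced energy between $k_1$ and $k_2$) is an artifact of eliminating $\o$ too early: at fixed $\o$ the comparison is immediate, and the paper's route avoids the auxiliary facts your argument tacitly uses, namely that $\o\mapsto k_i(\o)$ is differentiable (implicit function theorem, using $g'(k_i)\neq 0$ for $\o<\o_1$) and that $k\mapsto w_k\in H^1(\R)$ is differentiable so the $\partial_k$ term really drops out. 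Conversely, your ordering buys logical cleanliness: the paper needs the comparison $J_\o(w_{k_1})>J_\o(w_{k_2})$ before it can assert $J_{\o_0}(w_{k_2})=0$ and extract \eqref{w0}, whereas your identification $\o_0=g(k^*)$ only uses that every nontrivial critical point is some $w_{k_i}$ and is independent of the comparison.
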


\begin{proof}

We consider the energy functional $J_{\o}$ evaluated on the curve $k
\mapsto w_k$. In the computations that follow we use
\eqref{scaling} and change of variables. We have
\begin{align*}
\psi(k)&:= J_{\o}(w_k)= \frac{k^{\frac{3+p}{2(p-1)}}}{2}  \ir |w_1'(r)|^2\, dr  + \o \frac{k^{\frac{5-p}{2(p-1)}}}{2}  \ir
 w_1^2(r)  \, dr
 \\
&\quad + \frac{k^{\frac{3(5-p)}{2(p-1)}}}{24}
\left ( \ir w_1^2(r) \, dr \right)^3 -
\frac{k^{\frac{3+p}{2(p-1)}}}{p+1} \ir |w_1(r)|^{p+1}\, dr.
\end{align*}
Plugging \eqref{relations} into that expression,
$$\psi(k)= m \left [ \frac{p-5}{2(3+p)}k^{\frac{3+p}{2(p-1)}}+ \frac{\omega}{2} k^{\frac{5-p}{2(p-1)}} +
\frac{m^2}{24} k^{\frac{3(5-p)}{2(p-1)}}\right ].  $$ Then:
$$\frac{d}{dk} \psi(k) = m\  k^{\frac{7-3p}{2(p-1)}} \frac{5-p}{4(p-1)} \left
[  -k + \o + \frac 1 4 m^2 k^{\frac{5-p}{p-1}} \right].$$
In particular, the roots of \eqref{eq-k} are exactly the critical
points of $\psi$. Observe that:
$$ \frac{5-p}{2(p-1)}< \frac{3+p}{2(p-1)}< \frac{3(5-p)}{2(p-1)}.$$

Then $\psi$ is increasing near $0$ (for $\o>0$) and near infinity.
Therefore, for $\o \in (0, \o_1)$, its first root corresponds to a
local maximum of $\psi$ and the second one to a local minimum, so
$J(w_{k_1})>J(w_{k_2})$. Take now $\o \in (0,\o_0)$. Since in this
case the minimizer is nontrivial, it must correspond to $w_{k_2}$.
Moreover, $\o_0<\o_1$.

\medskip In order to get the value of $\o_0$, observe that
$J_{\o_0}(w_{k_2})=0$. Therefore, $\o_0>0$ solves:
$$ \left \{ \begin{array}{l}  \o + \frac 1 4 m^2 k^{\frac{5-p}{p-1}}=k,\\
\frac{p-5}{2(3+p)}k^{\frac{3+p}{2(p-1)}}+ \frac{\omega}{2}
k^{\frac{5-p}{2(p-1)}} + \frac{m^2}{24}
k^{\frac{3(5-p)}{2(p-1)}}=0.\end{array}\right.$$
From there, expression \eqref{w0} follows.

\end{proof}

\begin{remark}
Observe that the map $\psi$ defined in the proof of Proposition
\ref{gluing} gives us a quite clear interpretation of the
functional $J_{\o}$. Indeed, $k$ is a critical point of $\psi$ if and
only if $w_k$ is a critical point of $J_{\o}$. Moreover, the following holds.

\begin{enumerate}
\item If $\o>\o_1$, $\psi$ is positive and increasing without
critical points.

\item If $\o= \o_1$, $\psi$ is still positive and increasing, but
it has an inflection point at $k=k_0$.

\item If $\o \in (0,\o_1)$, $\psi$ has a local maximum and minimum
attained at $k_1$ and $k_2$, respectively.

\item If $\o=\o_0$, $\psi(k_2)=0$. Observe then, in this case, the
minimum of $J_{\o_0}$ is $0$, and is attained at $0$ and
$w_{k_2}$.

\item If $\o \in [0, \o_0)$, $\psi(k_2)<0$ and then $w_{k_2}$ is the
unique global minimizer, with $J_\o(w_{k_2})<0$.

\end{enumerate}

\end{remark}

\begin{remark} In general, we cannot obtain a more explicit expression of $m$ depending on
$p$, but it can be easily approximated by using some software. In
Figure 1 the maps $\o_0(p)$ and $\o_1(p)$ have been plotted.

\begin{figure}[h]
\centering \fbox{
\begin{minipage}[c]{110mm}
           \centering
        \resizebox{99mm}{66mm}{\includegraphics{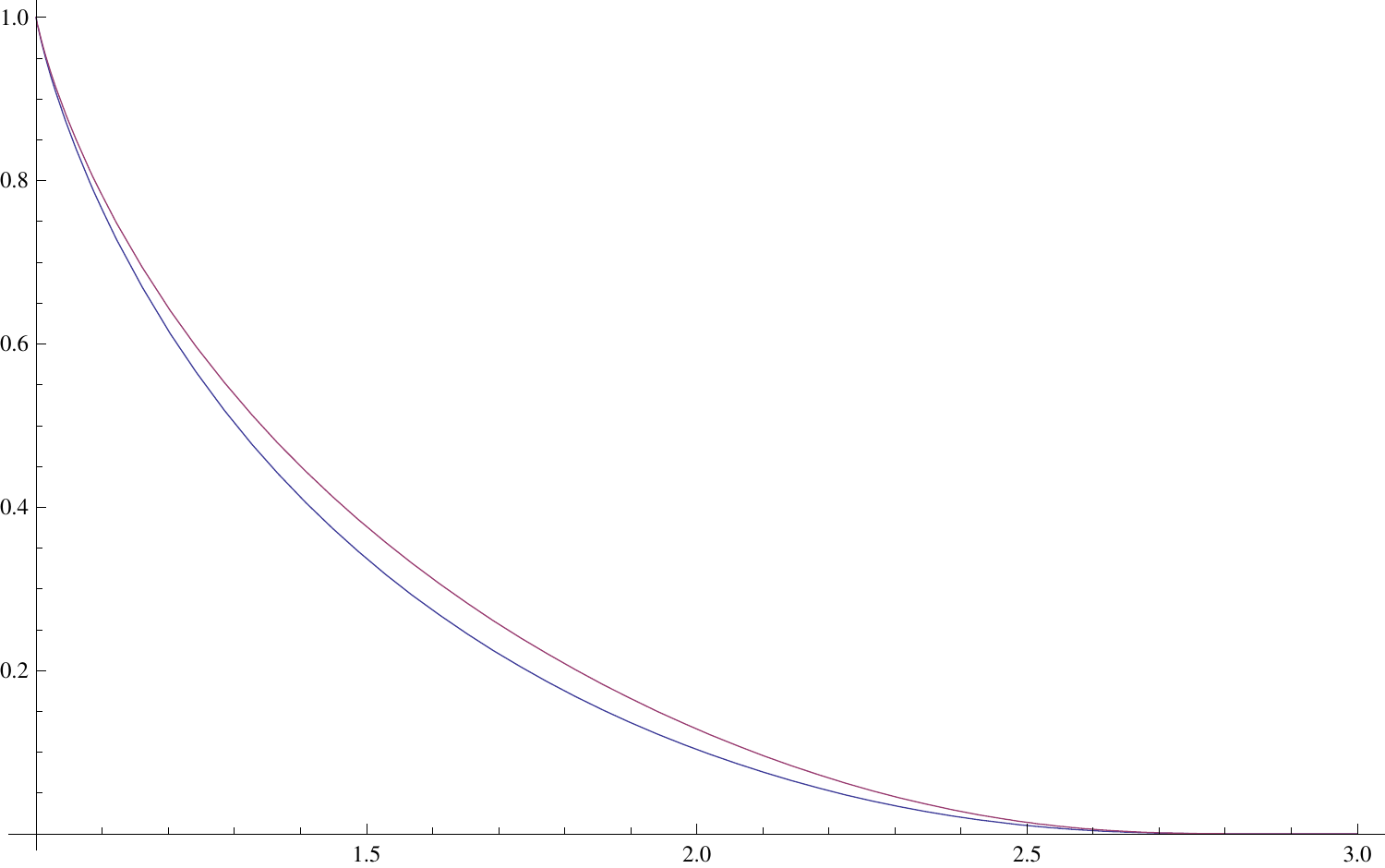}}
       \caption{\footnotesize The values $\o_0(p) <\o_1(p)$, for $p\in (1,3)$.}

         \end{minipage}
       }
\end{figure}

For some specific values of $p$, $m$ can be explicitly computed,
and hence $\o_0$ and $\o_1$. For instance, if $p=2$, $m=6$,
$\o_1=\frac{2}{9\sqrt{3}}$ and $\o_0=\frac{2}{5\sqrt{15}}$.

\end{remark}

We finish this section with a technical result that will be of use
later in the proof of Theorem \ref{teo1}.

\begin{proposition} \label{extra}
Assume $\omega \ge \o_0$, and $u_n \in H^1(\R)$ such that $J_{\o}(u_n)
\to 0$. There holds
\begin{enumerate}
\item if $\o > \o_0$, then $u_n \to 0$ in $H^1(\R)$;
\item if $\o=\o_0$, then, up to a subsequence, either $u_n \to 0$ or $u_n(\cdot-x_n) \to w_{k_2}$ in $H^1(\R)$, for some sequence $x_n \in \R$.

\end{enumerate}
\end{proposition}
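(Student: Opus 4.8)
The plan is to read $u_n$ as a minimizing sequence for $J_{\o}$ at the level $\min J_{\o}=0$ (recall $\phi(\o)=0$ for $\o\ge\o_0$, by Proposition \ref{pr: limit1}) and to track its possible loss of compactness by the concentration--compactness principle. The key point is that the minimizers of $J_\o$ are completely determined by Propositions \ref{explicit} and \ref{gluing}: for $\o>\o_0$ every nontrivial critical point $w_k$ satisfies $J_\o(w_k)>0$, so $0$ is the only minimizer, while for $\o=\o_0$ one has $J_{\o_0}(w_{k_2})=0<J_{\o_0}(w_{k_1})$, so the minimizers are exactly $0$ and the translates of $w_{k_2}$. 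Our task is thus to show that the minimizing sequence converges, up to translations, to one of these.

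I would begin with the routine reductions. By part a) of Proposition \ref{pr: limit1} the functional is coercive, so $u_n$ is bounded in $H^1(\R)$; after passing to a subsequence we may assume $\ir u_n^2\,dr\to\l\ge0$. If $\l=0$ then $\|u_n\|_{L^2}\to0$, Gagliardo--Nirenberg yields $\|u_n\|_{L^{p+1}}\to0$, and $J_\o(u_n)\to0$ then forces $\ir|u_n'|^2\,dr\to0$, whence $u_n\to0$ in $H^1(\R)$. So we may assume $\l>0$ and apply the concentration--compactness trichotomy to $\rho_n=u_n^2$. Vanishing is ruled out immediately by the nonlocal term: if $\sup_y\int_{y-1}^{y+1}u_n^2\,dr\to0$, Lions' lemma gives $u_n\to0$ in $L^{p+1}(\R)$, and since the kinetic and mass terms are nonnegative we would get $\liminf J_\o(u_n)\ge\frac1{24}\l^3>0$, contradicting $J_\o(u_n)\to0$.

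The dichotomy case is the heart of the matter, and I expect it to be the main obstacle. Here the decisive feature is that the nonlocal term is \emph{strictly superadditive} in the $L^2$-mass. For each $\e>0$ one writes the standard splitting $u_n=v_n+w_n$, with $v_n$ and $w_n$ supported in regions whose mutual distance diverges, $a_n:=\ir v_n^2\to\alpha\in(0,\l)$ and $b_n:=\ir w_n^2\to\l-\alpha$; the cross contributions to the kinetic and $L^{p+1}$ terms are $O(\e)$, while $\big(\ir u_n^2\big)^3=a_n^3+b_n^3+3a_nb_n(a_n+b_n)+O(\e)$. Since $J_\o(v_n),J_\o(w_n)\ge0$, this gives
\[
0=\liminf_n J_\o(u_n)\ \ge\ \liminf_n\tfrac18 a_nb_n(a_n+b_n)-C\e=\tfrac18\alpha(\l-\alpha)\l-C\e,
\]
and letting $\e\to0$ contradicts $\alpha\in(0,\l)$, $\l>0$. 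The genuine work is in making this decomposition rigorous, that is, controlling the cutoff errors in $\ir|u_n'|^2$ and the small mass in the transition region; but all of this is standard once the superadditive gain $\tfrac18\alpha(\l-\alpha)\l$ has been isolated.

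Only compactness remains: there exist $x_n\in\R$ so that $\wu_n:=u_n(\cdot-x_n)$ is tight in $L^2$. Then $\wu_n\weakto\wu$ with $\wu\neq0$, and tightness upgrades this to $\wu_n\to\wu$ in $L^2(\R)$ and in $L^{p+1}(\R)$ (using the uniform $L^\infty$-bound furnished in one dimension by the $H^1$-bound). Weak lower semicontinuity of the kinetic term together with $J_\o(\wu_n)=J_\o(u_n)\to0$ give $J_\o(\wu)\le0$, so $\wu$ is a nontrivial minimizer; the ensuing equality in the semicontinuity forces $\ir|\wu_n'|^2\to\ir|\wu'|^2$, hence $\wu_n\to\wu$ in $H^1(\R)$. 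If $\o>\o_0$ there is no nontrivial minimizer, so the case $\l>0$ cannot occur and $u_n\to0$ in $H^1(\R)$ (the whole sequence, since every subsequence has a further subsequence tending to $0$). If $\o=\o_0$ then $\wu=w_{k_2}(\cdot-y)$ for some $y\in\R$, and replacing $x_n$ by $x_n+y$ yields $u_n(\cdot-x_n)\to w_{k_2}$ in $H^1(\R)$, which is the second asserted alternative.
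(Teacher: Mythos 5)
Your proposal is correct, and it rests on exactly the same two pillars as the paper's proof: that $\min J_\o=0$ for $\o\ge\o_0$ (so $J_\o\ge 0$ on every piece of any decomposition), and that the cubic term $\frac1{24}\big(\ir u^2\,dr\big)^3$ is strictly superadditive under splitting of the $L^2$-mass, so any escaping mass creates a strictly positive energy deficit incompatible with $J_\o(u_n)\to 0$. Where you differ is in the implementation. The paper does not run the full Lions trichotomy: it only invokes the non-vanishing alternative (having first observed that $u_n\nrightarrow 0$ in $H^1$ forces $u_n\nrightarrow 0$ in $L^{p+1}$), obtains a translated weak limit $u\neq 0$, and then splits $\tilde u_n=v_n+u$ with $v_n=\tilde u_n-u\weakto 0$ via Brezis--Lieb; the superadditivity then appears as the cross terms $\frac38\big[(\ir v_n^2)^2\ir u^2+(\ir v_n^2)\ir u^2)^2\big]$ wait--- more precisely $\frac38\big[\big(\ir v_n^2\big)^2\big(\ir u^2\big)+\big(\ir v_n^2\big)\big(\ir u^2\big)^2\big]$, whose nonnegativity together with $J_\o(v_n),J_\o(u)\ge 0$ and $u\neq0$ forces $\ir v_n^2\to 0$ directly, after which interpolation gives $L^{p+1}$ and then $H^1$ convergence. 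This collapses your ``dichotomy'' and ``compactness'' steps into one and, crucially, avoids the spatial cutoff construction altogether --- which is precisely the part you defer as ``standard'' but which is the genuine technical core of your route (note that dichotomy only splits the mass, not the kinetic energy, so one needs the usual pigeonhole choice of transition annuli to control the gradient and $L^{p+1}$ cross terms). What your route buys in exchange is modularity and explicitness: the deficit $\frac18\alpha(\l-\alpha)\l$ is quantified, the vanishing case is excluded transparently by the cubic term, and your final identification of the possible limits through the classification of minimizers (Propositions \ref{explicit} and \ref{gluing}, plus the observation that $J_\o(u)>J_{\o_0}(u)\ge 0$ for $u\neq 0$, $\o>\o_0$) is spelled out, whereas the paper leaves that last step implicit.
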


\begin{proof}

Since $J_{\o}$ is coercive, we have that $u_n$ is bounded. If $u_n \to
0$ in $H^1(\R)$, we are done. Otherwise, we have that:
$$ o_n(1)= J_{\o}(u_n) \ge \frac 1 2 \ir \left(|u_n'(r)|^2 + \o u_n^2(r)\right) dr - \frac{1}{p+1} \ir |u_n(r)|^{p+1}\, dr.$$
Then, $u_n \nrightarrow 0$ in $L^{p+1}(\R)$. We can apply
concentration-compactness lemma (see \cite[Lemma I.1]{lions}), and
there exists $\xi_n \in \R$ such that $\int_{\xi_n-1}^{\xi_n+1} u_n^2
\ge \e >0$. Therefore, $\tilde{u}_n(r)= u_n(r-\xi_n) \weakto u \neq
0$ weakly in $H^1(\R)$. Define $v_n= \tilde{u}_n -u$, which clearly converges weakly
to $0$ in $H^1(\R)$.

\medskip

{\bf Step 1: $v_n \to 0$ in $L^{2}(\R)$.}

We just compute
\begin{align*}
o_n(1)&= J_{\o}(u_n)= J_{\o}(\tilde{u_n})= J_{\o}(v_n+u)
\\
& = \frac 1 2 \ir
\left(|v_n'|^2 + |u'|^2 + 2 v_n' u ' \right)  dr + \frac{\o}{2} \ir
\left(v_n^2 + u^2+ 2 v_n u \right) dr
\\
&\quad+ \frac 1 8 \left [
\left ( \ir v_n^2 \, dr \right )^3 + \left ( \ir u^2 \, dr
\right )^3  + 3 \left ( \ir v_n^2 \, dr \right )^2\left ( \ir
u^2 \, dr \right ) \right.
\\
& \quad\left.  +  3 \left ( \ir v_n^2 \, dr \right )\left ( \ir u^2 \, dr \right )^2 \right
] - \frac{1}{p+1} \ir |v_n+u|^{p+1}\, dr
+o_n(1).
\end{align*}
Here the mixed products converge to zero, since $v_n \weakto 0$.
Passing to a subsequence, we can assume that $v_n \to 0$ almost
everywhere. Then, the well-known Brezis-Lieb lemma
(\cite{brezis-lieb}) implies that
$$ \ir |v_n+u|^{p+1}\, dr - \ir (|v_n|^{p+1} + |u|^{p+1})\, dr  \to 0. $$
Then,
\begin{align*}
o_n(1)&= J_{\o}(u_n) = J_{\o}(v_n)+ J_{\o}(u) + \frac{3}{8} \left [ \left (
\ir v_n^2 \, dr \right )^2\left ( \ir u^2 \, dr \right )
\right.
\\
&\quad \left. +  \left ( \ir v_n^2 \, dr \right )\left (
\ir u^2 \, dr \right )^2 \right ] + o_n(1).
\end{align*}

It is here that the assumption $\o \ge \o_0$ is crucial. Indeed,
it implies that $J_{\o}(v_n) \ge 0$, $J_{\o}(u)\ge 0$. Recall that $u \neq
0$, to conclude the proof of Step 1.

\medskip {\bf Step 2: Conclusion.}

By interpolation,
$$ \| v_n\|_{L^{p+1}} \le  \| v_n\|_{L^{2}}^{\a}  \|
v_n\|_{L^{p+2}}^{1-\a},$$ for some $\a \in (0,1)$. Since $v_n$ is
bounded in $H^1(\R)$, then all norms above are bounded. Then, by
Step 1, $\| v_n\|_{L^{p+1}} \to 0$. In other words, $\tilde{u}_n
\to u$ in $L^{p+1}(\R)$.

From this it is easy to conclude. Indeed,
\begin{align*}
o_n(1)&= J_{\o}(\tilde{u}_n) = \frac 1 2 \ir \left(|\tilde{u}_n'|^2 +
\o \tilde{u}_n^2 \right)  dr + \frac 1 8  \left ( \ir
\tilde{u}_n^2 \, dr \right )^3 - \frac{1}{p+1} \ir
|\tilde{u}_n|^{p+1}\,dr,
\\
0 &\le J_{\o}(u) =  \frac 1 2 \ir
\left(|u'|^2 + \o u^2\right) dr + \frac 1 8  \left ( \ir u^2 \, dr
\right )^3 - \frac{1}{p+1} \ir |u|^{p+1}\,dr.
\end{align*}
Then, $\|\tilde{u}_n\|_{H^1(\R)} \to \|u\|_{H^1(\R)}$. And this implies that
$\tilde{u}_n \to u$ in $H^1(\R)$, finishing the proof.

\end{proof}

\section{Proof of Theorems \ref{teo1}, \ref{teo2}}
\begin{lemma}\label{le:asin}
Let $U\in H^1(\R)$ be an even function which decays to zero exponentially at infinity, and define $U_\rho(r)= U(r-\rho)$. Then there exists $C>0$ such that:
\[
I_\o(U_\rho)=2 \pi \rho J_\o(U)-C+o_\rho(1).
\]
\end{lemma}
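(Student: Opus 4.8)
The plan is to pass to polar coordinates, reduce $I_\o(U_\rho)$ to three one-dimensional integrals, and perform the translation $r=t+\rho$ so that the bump of $U_\rho$ sits near the origin in the new variable. Viewing $U_\rho$ as a radial function on $\R^2$ and using $\int_{\R^2}f(|x|)\,dx=2\pi\int_0^\infty f(r)\,r\,dr$ together with the substitution $r=t+\rho$, I obtain $(2\pi)^{-1}I_\o(U_\rho)=T_1+T_2+T_3$, where $T_1=\frac12\int_{-\rho}^{\infty}(|U'|^2+\o U^2)(t+\rho)\,dt$, where $T_2$ is the nonlocal term carrying the weight $1/(t+\rho)$ and the inner mass $\int_{-\rho}^{t}(s+\rho)U^2(s)\,ds$, and where $T_3=-\frac1{p+1}\int_{-\rho}^{\infty}|U|^{p+1}(t+\rho)\,dt$; this is exactly the expression displayed heuristically in Section 2. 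Throughout, the exponential decay of $U$ (hence of $U'$, $U^2$, $|U|^{p+1}$) lets me replace $\int_{-\rho}^{\infty}$ by $\int_{\R}$ at the cost of terms of size $\rho^k e^{-c\rho}=o_\rho(1)$, and controls all far-field tails.

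For the two local terms I split the weight $t+\rho=\rho+t$. The $\rho$-part produces $\rho$ times the corresponding term of $J_\o$ (up to an $o_\rho(1)$ tail), while the $t$-part converges to $\frac12\int_{\R}(|U'|^2+\o U^2)t\,dt$ and $-\frac1{p+1}\int_{\R}|U|^{p+1}t\,dt$; both vanish because $U$ is even, so these integrands are odd. Hence $T_1$ and $T_3$ contribute no constant, only $\rho$ times the first and third terms of $J_\o$ plus $o_\rho(1)$. This is the first place where the evenness hypothesis is essential.

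The nonlocal term $T_2$ is the crux. Writing $A_\rho(t)=\int_{-\rho}^{t}U^2$ and $B_\rho(t)=\int_{-\rho}^{t}sU^2$, the inner integral equals $\rho A_\rho+B_\rho$; expanding $\frac1{t+\rho}=\frac1\rho\bigl(1-\frac{t}{t+\rho}\bigr)$ and squaring, I extract three orders. The leading order is $\frac{\rho}{8}\int U^2 A_\rho^2=\frac{\rho}{24}\bigl(\int_{\R}U^2\bigr)^3+o_\rho(1)$, i.e. $\rho$ times the nonlocal term of $J_\o$ (using $\int_{\R}U^2a^2=\frac13(\int_{\R}U^2)^3$ with $a(t)=\int_{-\infty}^{t}U^2$, and $|A_\rho-a|=O(e^{-c\rho})$ so that the $\rho$-weighted correction is $o_\rho(1)$). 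The genuinely constant order is $C'=\frac14\int_{\R}U^2 a b-\frac18\int_{\R}tU^2 a^2$, with $b(t)=\int_{-\infty}^{t}sU^2$; all remaining pieces are $O(1/\rho)$ or $O(\rho e^{-c\rho})$, hence $o_\rho(1)$. Setting $C=-2\pi C'$ yields the stated identity. Finally, to see $C>0$ (needed later so that $\inf I_{\o_0}<0$), I write $a=\tfrac{M}{2}+\tilde a$ with $M=\int_{\R}U^2$ and $\tilde a$ odd, $\tilde a(t)=\int_0^{t}U^2$, $\tilde a'=U^2$. The even/odd structure kills the cross terms, giving $\int tU^2 a^2=M\int tU^2\tilde a$ and $\int U^2 ab=\frac{M}{2}\int U^2 b$; one integration by parts (with vanishing boundary terms since $b(\pm\infty)=0$) gives $\int U^2 b=-\int tU^2\tilde a$, so $C'=-\frac{M}{4}\int_{\R}tU^2\tilde a\,dt$. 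Since $t\tilde a(t)=t\int_0^{t}U^2\ge 0$, the integral is strictly positive, whence $C=\frac{\pi}{2}M\int_{\R}tU^2(t)\bigl(\int_0^{t}U^2\bigr)dt>0$.

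The main obstacle is the rigorous bookkeeping of $T_2$: justifying the expansion of $1/(t+\rho)$ uniformly once the exponentially small far-field is removed, and checking that every term beyond the leading $\rho$-order and the constant $C'$ is genuinely $o_\rho(1)$ (this is where splitting into $|t|\le\rho^{1/2}$ and $|t|>\rho^{1/2}$, together with the exponential decay, does the work). By contrast, the positivity of $C$ falls out cleanly from the evenness of $U$.
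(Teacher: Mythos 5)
Your proof is correct and follows essentially the same route as the paper's: polar coordinates, the translation $r=t+\rho$, evenness to kill the $t$-weighted local contributions, exponential decay to control tails, and a dominated-convergence extraction of the $\rho$-order term $\rho J_\o(U)$ and of the constant from the nonlocal term --- indeed your constant-order expression $\frac14\int_\R U^2ab-\frac18\int_\R tU^2a^2$ is exactly $\tfrac18$ times the sum of the limits of the two terms $(I)$ and $(II)$ in the paper's proof. The only point where you diverge is the sign of $C$: the paper verifies that each limiting integral is separately negative (using $b<0$ and the monotonicity of $a$), whereas you combine them via the parity splitting $a=\tfrac M2+\tilde a$ and one integration by parts into the closed form $C=\frac{\pi}{2}\,M\int_\R tU^2(t)\bigl(\int_0^t U^2\bigr)dt>0$, a mild but pleasant sharpening of the same fact.
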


\begin{proof}
We have
\begin{align}
(2\pi)^{-1} I_\o(U_\rho) &= \frac 12\int_0^{+\infty}
\left(|U'_\rho|^2 +\o U_\rho^2 \right)r \, dr +\frac
18\int_0^{+\infty} \frac{U_\rho^2(r)}{r}\left(\int_0^r s
U_\rho^2(s) \, ds\right)^2 dr \nonumber
\\
&\quad-\frac{1}{p+1} \int_0^{+\infty} |U_\rho|^{p+1} r\, dr. \label{eq:asin}
\end{align}
Let us, first of all, evaluate the local terms. By the evenness and the exponential decay of $U$, we get
\begin{align}
\int_0^{+\infty}|U_\rho'|^2  r \, dr
&=\int_{-\infty}^{+\infty} |U'(r-\rho)|^2   (r-\rho) \, dr +\rho\int_{-\infty}^{+\infty}|U'(r-\rho)|^2  \, dr +o_\rho(1)\nonumber
\\
&=\rho \int_{-\infty}^{+\infty}|U'|^2  \, dr +o_\rho(1).\label{eq:asin1}
\end{align}
Analogously
\begin{align}
\int_0^{+\infty}U_\rho^2  r \, dr
&=\rho \int_{-\infty}^{+\infty} U^2  \, dr +o_\rho(1),\label{eq:asin2}
\\
\int_0^{+\infty}|U_\rho|^{p+1}  r \, dr
&=\rho \int_{-\infty}^{+\infty} |U|^{p+1}  \, dr +o_\rho(1).\label{eq:asin3}
\end{align}
For what concerns the nonlocal term, we have
\begin{align*}
&\int_0^{+\infty}  \frac{U_\rho^2(r)}{r}\left(\int_0^r s U_\rho^2(s) \, ds\right)^2 dr
-\rho\int_0^{+\infty}  U_\rho^2(r)\left(\int_0^r  U_\rho^2(s) \, ds\right)^2 dr
\\
&\qquad=\underset{(I)}{\underbrace{\int_0^{+\infty}  U_\rho^2(r) \left(\frac{1}{r}-\frac{1}{\rho}\right)\left(\int_0^r s U_\rho^2(s) \, ds\right)^2 dr}}
\\
&\qquad\quad+\underset{(II)}{\underbrace{\frac 1\rho\int_0^{+\infty}  U_\rho^2(r)\left[\left(\int_0^r  s U_\rho^2(s) \, ds\right)^2-\left(\int_0^r  \rho U_\rho^2(s) \, ds\right)^2 \right] dr}}.
\end{align*}
Let us study the term $(I)$:
\begin{align*}
(I)&=\int_{-\infty}^{+\infty}  U_\rho^2(r) \frac{\rho-r}{r\rho}\left(\int_{-\infty}^r s U_\rho^2(s) \, ds\right)^2 dr
+o_\rho(1)
\\
&=-\int_{-\infty}^{+\infty}  U^2(r) \frac{r}{(\rho+r)\rho}\left(\int_{-\infty}^r (s+\rho) U^2(s) \, ds\right)^2 dr
+o_\rho(1)
\\
&=\int_{0}^{+\infty}  U^2(r) \frac{r}{(\rho-r)\rho}\left(\int_{-\infty}^{-r} (s+\rho) U^2(s) \, ds\right)^2 dr
\\
&\quad-\int_{0}^{+\infty}  U^2(r) \frac{r}{(\rho+r)\rho}\left(\int_{-\infty}^r (s+\rho) U^2(s) \, ds\right)^2 dr
+o_\rho(1)
\\
&=\int_{0}^{+\infty}  U^2(r) \left(\frac{r}{(\rho-r)\rho}-\frac{r}{(\rho+r)\rho}\right)\left(\int_{-\infty}^{-r} (s+\rho) U^2(s) \, ds\right)^2 dr
\\
&\quad +\int_{0}^{+\infty}  U^2(r) \frac{r}{(\rho+r)\rho}
\left[\left(\int_{-\infty}^{-r} (s+\rho) U^2(s) \, ds\right)^2-\left(\int_{-\infty}^r (s+\rho) U^2(s) \, ds\right)^2\right] dr
\\
&\quad+o_\rho(1)
\\
&= \frac{1}{\rho} \int_{0}^{+\infty}  U^2(r) \left(\frac{2r^2\rho^2}{(\rho-r)(\rho+r)}\right)
\left(\int_{-\infty}^{-r} \frac{s+\rho}{\rho} \ U^2(s) \, ds\right)^2 dr
\\
& \quad +\int_{0}^{+\infty}  U^2(r) \frac{r\rho}{(\rho+r)}
\left[\left(\int_{-\infty}^{-r} \frac{s+\rho}{\rho} \ U^2(s) \, ds\right)^2-
\left(\int_{-\infty}^r \frac{s+\rho}{\rho}\ U^2(s) \, ds\right)^2\right] dr
\\
&\quad+o_\rho(1). \end{align*}
We now pass to the limit by Lebesgue Theorem, and obtain:
\begin{align*}
(I)&= \int_{0}^{+\infty}  U^2(r) r
\left[\left(\int_{-\infty}^{-r}  U^2(s) \, ds\right)^2-\left(\int_{-\infty}^r  U^2(s) \, ds\right)^2\right] dr
+o_\rho(1) &
\\
&=-C_I +o_\rho(1).
\end{align*}
Let us study the term $(II)$:
\begin{align*}
(II)&=\frac 1\rho\int_0^{+\infty}  U_\rho^2(r)\left(\int_0^r  (s+\rho) U_\rho^2(s) \, ds\right)\left(\int_0^r  (s-\rho) U_\rho^2(s) \, ds\right)dr
\\
&=\frac 1\rho\int_{-\infty}^{+\infty}  U_\rho^2(r)\left(\int_{-\infty}^r  (s+\rho) U_\rho^2(s) \, ds\right)\left(\int_{-\infty}^r  (s-\rho) U_\rho^2(s) \, ds\right)dr
+o_\rho(1)
\\
&=\int_{-\infty}^{+\infty}  U^2(r)\left(\int_{-\infty}^r  \frac{s+2\rho}{\rho} U^2(s) \, ds\right)\left(\int_{-\infty}^r  s U^2(s) \, ds\right)dr
+o_\rho(1). \end{align*}
Again by Lebesgue Theorem,
\begin{align*}
(II)&= 2\int_{-\infty}^{+\infty}  U^2(r)\left(\int_{-\infty}^r   U^2(s) \, ds\right)\left(\int_{-\infty}^r  s U^2(s) \, ds\right)dr
+o_\rho(1)
\\
&=-C_{II} +o_\rho(1).
\end{align*}
Observe that the above expression is negative since the function $r \mapsto \int_{-\infty}^r s U^2(s) \, ds$ is negative.
Therefore, denoting by $C=C_I+C_{II}>0$, we have
\beq\label{eq:asin4}
\int_0^{+\infty}  \frac{U_\rho^2(r)}{r}\left(\int_0^r s U_\rho^2(s) \, ds\right)^2 dr
=\rho\int_0^{+\infty}  U_\rho^2(r)\left(\int_0^r  U_\rho^2(s) \, ds\right)^2 dr
-C +o_\rho(1).
\eeq
Hence the conclusion follows by \eqref{eq:asin}, \eqref{eq:asin1}, \eqref{eq:asin2}, \eqref{eq:asin3} and \eqref{eq:asin4}.
\end{proof}

In our next result we study the behavior of unbounded sequences with energy
bounded from above. This will be essential for the proof of Theorems \ref{teo1}, \ref{teo2}.

\begin{proposition} \label{prop-fund} Assume $\o>0$, and $u_n \in \Hr$ such that $\|u_n\| $ is unbounded but
$I_{\o}(u_n)$ is bounded from above. Then, there exists a subsequence
(still denoted by $u_n$) such that:
\begin{enumerate}

\item[i)] for all $\e>0$, $\displaystyle \int_{\e
\|u_n\|^2}^{+\infty} \big(|u_n'|^2 + u_n^2 \big) \, dr \le C$;

\item[ii)] there exists $\d \in (0,1)$ such that  $\displaystyle
\int_{\d \|u_n\|^2}^{\d^{-1} \|u_n\|^2} \big(| u_n'|^2 + u_n^2 \big )\, dr
\ge c>0$;

\item[iii)] $\|u_n\|_{L^2(\RD)} \to +\infty$.

\end{enumerate}

\end{proposition}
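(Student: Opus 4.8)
The plan is to work throughout in the radial variable, writing $(2\pi)^{-1}I_\omega(u_n)=\tfrac12(A_n+\omega B_n)+\tfrac18 N_n-\tfrac1{p+1}P_n$, where $A_n=\int_0^{+\infty}|u_n'|^2\,r\,dr$, $B_n=\int_0^{+\infty}u_n^2\,r\,dr$, $N_n=\int_0^{+\infty}\frac{u_n^2}{r}\left(\int_0^r s\,u_n^2\,ds\right)^2dr$ and $P_n=\int_0^{+\infty}|u_n|^{p+1}\,r\,dr$, so that $\|u_n\|^2=2\pi(A_n+B_n)$ and $\|u_n\|_{L^2}^2=2\pi B_n$. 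Item (i) is then a Chebyshev-type estimate: on $\{r\ge\varepsilon\|u_n\|^2\}$ one has $1\le r/(\varepsilon\|u_n\|^2)$, hence
\[
\int_{\varepsilon\|u_n\|^2}^{+\infty}(|u_n'|^2+u_n^2)\,dr\le\frac{1}{\varepsilon\|u_n\|^2}\int_0^{+\infty}(|u_n'|^2+u_n^2)\,r\,dr=\frac{1}{2\pi\varepsilon},
\]
uniformly in $n$. For (iii) I argue by contradiction: if $B_n$ were bounded, then \eqref{ineq} (which reads $\int_0^{+\infty}u_n^4\,r\,dr\le 2(A_nN_n)^{1/2}$), the interpolation $P_n\le B_n^{(3-p)/2}\left(\int_0^{+\infty}u_n^4\,r\,dr\right)^{(p-1)/2}$ and Young's inequality (admissible since $p<3$ makes the exponent $(p-1)/2<1$) would let me absorb the nonlinear term and bound $A_n$ and $N_n$, contradicting that $\|u_n\|$ is unbounded; hence $\|u_n\|_{L^2}\to+\infty$.

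The heart of the matter is (ii), for which I first calibrate the sizes of all quantities. Using $I_\omega(u_n)\le C$ to write $A_n,N_n\lesssim 1+P_n$ and inserting this into $P_n\le B_n^{(3-p)/2}\big(2(A_nN_n)^{1/2}\big)^{(p-1)/2}$, the resulting inequality solves (again because $p<3$) to $P_n\lesssim B_n$, and then $A_n,N_n\lesssim B_n$; conversely, discarding the nonnegative terms in $I_\omega(u_n)\le C$ gives $\frac{1}{p+1}P_n\ge\frac{\omega}{2}B_n-C$, so $P_n\gtrsim B_n$ once $B_n\to+\infty$. Therefore $A_n,B_n,N_n,P_n$ and $\|u_n\|^2$ are all comparable to $B_n\to+\infty$, and the relevant length scale is indeed $\|u_n\|^2\asymp B_n$, as in the statement.

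To prove (ii) I localize the mass at that scale. The decisive tool is a lower bound for the nonlocal term: with $M_n(r)=\int_0^r s\,u_n^2\,ds$ a change of variables gives $N_n=\int_0^{+\infty}\frac{M_n(r)^2}{r^2}\,dM_n(r)\ge\mu^3/(3\rho^2)$ whenever $M_n(\rho)\ge\mu$. Applying this with $\rho=\varepsilon\|u_n\|^2$, $\mu=cB_n$ and recalling $N_n\lesssim B_n$ and $\|u_n\|^2\asymp B_n$, we find $M_n(\varepsilon\|u_n\|^2)<cB_n$ for $\varepsilon$ small: no fixed fraction of the mass can sit below radius $\varepsilon\|u_n\|^2$. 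At the opposite end, (i) makes $\|u_n\|_{H^1(R,+\infty)}$ small for $R=\delta^{-1}\|u_n\|^2$, so by the one-dimensional Sobolev embedding $u_n$ is uniformly small on $[R,+\infty)$ and $\int_R^{+\infty}|u_n|^{p+1}r\,dr\le C\delta^{(p-1)/2}B_n$ is a small fraction of $P_n$. On $[0,\varepsilon\|u_n\|^2]$ I bound $\int_0^{\varepsilon\|u_n\|^2}|u_n|^{p+1}r\,dr$ by interpolating between the small mass $M_n(\varepsilon\|u_n\|^2)<cB_n$ and $\int_0^{+\infty}u_n^4\,r\,dr\lesssim B_n$, obtaining again a small fraction of $P_n$. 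Subtracting the two tails from $P_n\gtrsim B_n$ leaves a definite amount of weighted $|u_n|^{p+1}$-mass in the annulus $[\varepsilon\|u_n\|^2,\delta^{-1}\|u_n\|^2]$; since there $r\le\delta^{-1}\|u_n\|^2$, this yields $\int|u_n|^{p+1}\,dr\ge c>0$ over the annulus, and the one-dimensional inequality $\int|u_n|^{p+1}\,dr\le C\|u_n\|_{H^1}^{p+1}$ converts it into the desired lower bound for $\int(|u_n'|^2+u_n^2)\,dr$, after relabelling $\delta$.

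The main obstacle is exactly this localization, and within it the small-radius regime. A priori the nonlinearity $P_n$, which must be of order $B_n$, could be created by tall, thin spikes located at radii $\ll\|u_n\|^2$, a scenario that neither (i) nor the energy bound alone rules out. What saves the argument is that concentrating mass at a small radius is heavily penalized by the nonlocal term via $N_n\ge\mu^3/(3\rho^2)$, while \eqref{ineq} simultaneously caps $\int_0^{+\infty}u_n^4\,r\,dr$; it is the interplay of these two facts that forbids the nonlinearity from escaping to small scales and pins the profile at the scale $\|u_n\|^2$. One must also fix the cut-off parameters in the correct order—first $c$ small against the constant in $P_n\gtrsim B_n$, then $\varepsilon$ and $\delta$ small against $c$—so that both tails are genuinely negligible.
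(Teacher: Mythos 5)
Your proof is correct, but it takes a genuinely different route from the paper's. The paper converts inequality \eqref{ineq}, via Cauchy--Schwarz, into the pointwise lower bound \eqref{eq:If}, whose last integrand $\frac{\omega}{4}u_n^2+\frac18u_n^4-\frac1{p+1}|u_n|^{p+1}$ is negative precisely on a level set $\{x:u_n(x)\in(\alpha,\beta)\}$; it then argues geometrically on that set: boundedness of $I_\omega(u_n)$ forces its measure to diverge (whence (iii)), Strauss's radial lemma gives $\|u_n\|^2\ge\alpha^2\rho_n$ for its outer radius $\rho_n$, the nonlocal term is bounded below by $c\,(\mathrm{meas})^3/\rho_n^2$ as in \eqref{eq:Arho}, and balancing the three competing terms forces measure $\sim\gamma_n\sim\rho_n\sim\|u_n\|^2$, which yields (i) and (ii). You instead calibrate the four radial integrals directly: (i) by a one-line Chebyshev estimate (cleaner than the paper's, valid for the full sequence with explicit constant $1/(2\pi\varepsilon)$); (iii) by \eqref{ineq} together with the interpolation $P_n\le B_n^{(3-p)/2}\bigl(\int u_n^4\,r\,dr\bigr)^{(p-1)/2}$ and Young's inequality; and (ii) by the monotone-mass inequality $N_n=\int M_n^2r^{-2}\,dM_n\ge M_n(\rho)^3/(3\rho^2)$, which isolates in one clean line the same mechanism the paper encodes through \eqref{eq:Arho}---the nonlocal term heavily penalizes $L^2$-mass sitting at radii $\ll\|u_n\|^2$---and then pinches $P_n\gtrsim B_n$ into the annulus via the two tail estimates; your ordering of the parameters ($c$ first, then $\varepsilon$, $\delta$) is the right one. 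One harmless overstatement: only the upper bounds $A_n,N_n\lesssim B_n$ follow from your calibration, not full comparability, but only the upper bounds are actually used. As for what each approach buys: yours is more elementary and self-contained (no Strauss lemma, no level-set geometry), whereas the paper's heavier machinery produces reusable structure---the intermediate inequality \eqref{eq:If} is invoked again in the proof of Theorem \ref{teo1} to get coercivity of $I_\omega$ on balls (which is how the minimizing sequence there is constructed), and the set of measure $\sim\|u_n\|^2$ on which $u_n\ge\alpha$, located at distance $\sim\|u_n\|^2$, prefigures the soliton profile extracted in Steps 1--2 of that proof.
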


\begin{proof}

The beginning of the proof follows the ideas of \cite[Theorem 4.3]{yo}. The main difference is that here we cannot conclude directly
that $I_\o$ is bounded from below, and indeed this fact depends on
$\o$. The proof of Theorem \ref{teo1} will require much more work.

We start using inequality \eqref{ineq} and Cauchy-Schwartz
inequality to estimate $I_\o$:
\begin{align}
I_\o(u) &\ge \frac{\pi}{2}\int_0^{+\infty} \left(|u'|^2 +\o u^2
\right)r \, dr +\frac{\pi}{8}\int_0^{+\infty}
\frac{u^2(r)}{r}\left(\int_0^r s u^2(s) \, ds\right)^2 dr
\nonumber
\\
&\quad+ 2\pi \int_0^{+\infty}\left(\frac{\o}{4}u^2
+\frac{1}{8}u^4-\frac{1}{p+1}  |u|^{p+1}\right) r\, dr.
\label{eq:If}
\end{align}
Define
\[
f:\R_+\to \R,\qquad f(t)= \frac{\o}{4}t^2 +\frac{1}{8}t^4-\frac{1}{p+1} t^{p+1}.
\]
Then, the set $\{t > 0: f(t) < 0\}$ is of the form $(\a,\b)$,
where $\a,\b$ are positive constants depending only on $p, \o$.
Moreover, we denote by $-c_0=\min f<0$.

For each function $u_n$, we define:
 \begin{equation*} 
 A_n=\{x\in \RD : u_n(x)\in (\a,\b)\},\ \rho_n= \sup\{|x|: x\in A_n\}.
 \end{equation*}
\\
With these definitions, we can rewrite \eqref{eq:If} in the form
\begin{equation} \label{eq:s0} I_\o(u_n) \ge
\frac{\pi}{2} \int_0^{+\infty} \left(|u_n'|^2 +\o u_n^2 \right)r
\, dr +\frac{\pi}{8}\int_0^{+\infty}
\frac{u_n^2(r)}{r}\left(\int_0^r s u_n^2(s) \, ds\right)^2 dr
-c_0|A_n|.
\end{equation}
In particular this implies that $|A_n|$ must diverge, and hence
$\rho_n$. This already proves (iii).

\medskip By Strauss Lemma \cite{strauss}, we have
\beq\label{eq:sl} \a\le u_n(\rho_n)\le
\frac{\|u_n\|}{\sqrt{\rho_n}}, \ \Rightarrow \|u_n\|^2\ge
\a^2\rho_n. \eeq

We now estimate the nonlocal term. For that, define
\begin{equation} \label{Bn} B_n=A_n\cap B(0,\g_n), \mbox{ for }
\g_n\in (0,\rho_n) \mbox{ such that }|B_n|=\frac 1 2|A_n|.
\end{equation}

Then,
\begin{align}
\int_0^{+\infty}  \frac{u_n^2(r)}{r}\left(\int_0^r s u_n^2(s) \,
ds\right)^2 dr &\ge \frac{1}{4\pi^2} \int_{\g_n}^{+\infty}
\frac{u_n^2(r)}{r}\left(\int_{B_n}  u_n^2(x) \, dx\right)^2 dr
\nonumber
\\
&\ge c|A_n|^2\int_{\g_n}^{+\infty} \frac{u_n^2(r)}{r}\, dr \nonumber
\\
&\ge c|A_n|^2\int_{A_n\setminus B_n} \frac{u_n^2(x)}{|x|^2}\, dx \nonumber
\\
&\ge c\frac{|A_n|^2}{\rho_n^2}\int_{A_n\setminus B_n} u_n^2(x)\, dx \nonumber
\\
&\ge c\frac{|A_n|^3}{\rho_n^2}.\label{eq:Arho}
\end{align}
Hence, by \eqref{eq:If}, \eqref{eq:sl} and \eqref{eq:Arho}, we get
\[
I_\o(u_n)
\ge c \rho_n +c\frac{|A_n|^3}{\rho_n^2}-c_0|A_n|
=  \rho_n\left(c +c\frac{|A_n|^3}{\rho_n^3}-c_0\frac{|A_n|}{\rho_n}\right).
\]
Observe that  $t\mapsto c+ct^3-c_0 t$ is strictly positive near
zero and goes to $+\infty$, as $t\to +\infty$. Then we can assume,
passing to a subsequence, that $|A_n| \sim \rho_n$. In other
words, there exists $m>0$ such that $ \rho_n|A_n|^{-1} \to m$ as
$n\to +\infty$.
\\
Taking into account \eqref{eq:s0} we conclude that up to a
subsequence, $\|u_n\|^2 \sim \rho_n$. Moreover, for any fixed
$\e>0$, we have:
\[
C\rho_n \ge \|u_n\|_{L^2}^2  \ge \int_{\e \rho_n}^{+\infty}
u_n^2r \,dr \ge \e \rho_n \int_{\e \rho_n}^{+\infty} u_n^2 \,dr.
\]
An analogous estimate works also for $\int_{\e \rho_n}^{+\infty}
|u_n'|^2 dr$. This proves (i).
\\
\
\\
We now show that for some $\d>0$, $\|u_n\|_{H^1(\d \rho_n,\rho_n)}
\nrightarrow 0$, which implies assertion (ii).
\\
First, recall the definition of $B_n$ and $\g_n$ in \eqref{Bn}.
Then,
$$ \int_{\g_n}^{\rho_n} u_n^2(r) \, dr \ge \rho_n^{-1} \int_{\g_n}^{\rho_n} u_n^2(r) r\,
dr\ge \rho_n^{-1} \int_{A_n\setminus B_n} u_n^2(x) dx \ge
\rho_n^{-1} |A_n\setminus B_n| \a^2>c>0.$$
To conclude it suffices to show that $\g_n \sim \rho_n$. Indeed,
by repeating the estimate \eqref{eq:Arho} with $A_n$ replaced by
$B_n$, we infer
\[
I_\o(u_n)
\ge c \rho_n +c\frac{|A_n|^3}{\g_n^2}-c_0|A_n|
=  \g_n\left(c \frac{\rho_n}{\g_n} +c\frac{|A_n|^3}{\g_n^3}-c_0\frac{|A_n|}{\g_n}\right).
\]
And we are done since $I_{\o}(u_n)$ is bounded from above.

\end{proof}

\begin{proof}[Proof of Theorem \ref{teo1}]

If $\o\in (0,\o_0)$, then $J_\o(w_{k_2})<0$ (see Proposition
\ref{pr: limit1}): applying Lemma \ref{le:asin} to $U= w_{k_2}$ we
conclude assertion (i).

\medskip
We now prove $(ii)$ and $(iii)$. Let us denote by $H^1_{0,r
}(B(0,R))$ the Sobolev space of radial functions with zero
boundary value. Given any $n \in \N$, Proposition \ref{prop-fund}
implies that $I_{\o}|_{H^1_{0,r }(B(0,n))}$ is coercive (indeed, this
is an immediate consequence of \eqref{eq:If}). So, there exists
$u_n$ a minimizer for $I_{\o}|_{H^1_{0, r}(B(0,n))}$. Moreover,
$$I_{\o}(u_n) \to \inf I_{\o}, \mbox{ as } n \to +\infty.$$

If $u_n$ is bounded, then $I_{\o}(u_n) $ is also bounded and
therefore $\inf I_\o$ is finite. In what follows we assume that
$u_n$ is an unbounded sequence. Then, the sequence $u_n$ satisfies
the hypotheses of Proposition \ref{prop-fund}. Let $\d>0$  be
given by that proposition.

The proof will be divided in several steps.

\

{\bf Step 1:} $\dis \int_{\frac{\delta}{2}
\|u_n\|^2}^{\frac{2}{\delta} \|u_n\|^2} |u_n|^{p+1}\, dr
\nrightarrow 0$.

\

 For each $n \in \N$, we can choose $x_n$, $y_n$: $$ \frac{\d}{2} \|u_n\|^2 < x_n<\d \|u_n\|^2-1,\ \  \d^{-1} \|u_n\|^2+1<y_n< 2\d^{-1} \|u_n\|^2$$ such that
\[
\|u_n\|_{H^1(x_n,x_n+1)}+\|u_n\|_{H^1(y_n,y_n+1)} \le \frac{C}{\|u_n\|}.
\]
Observe that if $\d^{-1} \|u_n\|^2 \ge n$, the choice of $y_n$ can be arbitrary, but it is unnecessary.
Take $\phi_n:[0,+\infty] \to [0,1]$ be a $C^{\infty}$-function such that
\[
\phi_n(r)=\left\{
\begin{array}{ll}
0, & \hbox{if }r\le x_n,
\\
1, & \hbox{if }  x_n+1\le r \le y_n,
\\
0, & \hbox{if } r\ge y_n+1.
\end{array} \ \ |\phi_n'(r)|\le 2.
\right.
\]
We have
\begin{align*}
0&=I_\o'(u_n)[\phi_n u_n] \ge 2\pi \int_{x_n}^{y_n} \left(|u'_n|^2 +\o
u_n^2\right)r \, dr
 - 2\pi \int_{x_n}^{y_n} |u_n|^{p+1}r \, dr +O(1)
\\
& \ge \|u_n\|^2 \left ( \frac{\d}{2} \int_{x_n}^{y_n} \left(|u'_n|^2 +\o u_n^2\right)  dr
 - \frac{2}{\d} \int_{x_n}^{y_n} |u_n|^{p+1}r \, dr \right)
+O(1).
\end{align*}
This, together with the fact that $\|u_n\|_{H^1(x_n, y_n)}$ does
not tend to zero, allows us to conclude the proof of Step 1.

\

{\bf Step 2:} Exponential decay.

\

At this point we can apply the concentration-compactness principle
(see \cite[Lemma 1.1]{lions}); there exists $\s>0$ such that
\[
\sup_{\xi\in [x_n,\ y_n]}\int_{\xi-1}^{\xi+1}u_n^2\, dr\ge 2\s>0.
\]
Let us define the set:

\beq \label{def-D_n} D_n=\left\{\xi \in [x_n,\ y_n] :
\int_{\xi-1}^{\xi+1} \left(|u_n'|^2+ u_n^2\right) dr\ge \s\right\} \neq
\emptyset, \quad \hbox{and} \quad \xi_n=\max D_n. \eeq

By definition, $\int_{\zeta-1}^{\zeta+1}(|u_n'|^2+u_n^2)\, dr <  \s$ for all $\zeta > \xi_n$. By embedding of $H^1(\zeta-1,\zeta+1)$ in $L^{\infty}$, $0<u_n(\zeta) < C \s$ for any $\zeta > \xi_n$. From this we will get exponential decay of $u_n$. Indeed, $u_n$ is a solution of
\[ 
 - u_n''(r) - \frac{u'(r)}{r} + \o u_n(r) + f_n(r) u_n(r)  = |u_n(r)|^{p-1}u_n(r),
 \]
with
$$  f_n(r)=  \frac{h_n^2(r)}{r^2} + \int_r^n \frac{h_n(s)}{s} u_n^2(s) \, ds, \ \ \  h_n(r)= \frac 12\int_0^r u_n^2(s) s \, ds.$$

It is important to observe that $0\le f_n(r) \le C$  for all $r>\d
\|u_n\|^2$. Then, by taking smaller $\s$, if necessary, we can
conclude that there exists $C>0$ such that
$$ |u_n(r)| <C {\rm exp}\left(- \sqrt{\o} (r-\xi_n)\right), \quad
\mbox{ for all } r>\xi_n. $$

The local $C^1$ regularity theory for the Laplace operator (see
\cite[Section 3.4]{gilbarg}) implies a similar estimate for
$u_n'(r)$. In other words,
\beq \label{decay1} |u_n(r)| + |u_n'(r)|<C {\rm exp}\left(- \sqrt{\o}
(r-\xi_n)\right), \quad \mbox{ for all } r>\xi_n. \eeq

\

{\bf Step 3:} Splitting of $I_\o(u_n)$.

Let us now choose another cut-off function; take $z_n$:
 $$ \xi_n - 3 \|u_n\| < z_n < \xi_n-2\|u_n\|, $$ such that
\[
\|u_n\|^2_{H^1(z_n, z_n+1)} \le \frac{C}{\|u_n\|}.
\]

Define $\psi_n:[0,+\infty] \to [0,1]$ be a smooth function such that
\beq \label{def-psi_n}
\psi_n(r)=\left\{
\begin{array}{ll}
0, & \hbox{if }r\le z_n,
\\
1, & \hbox{if }  r \ge z_n+1,
\end{array}  \ \ |\psi_n'(r)|\le 2.
\right.
\eeq

In what follows we want to estimate $I_{\o}(u_n)$ with $I_{\o}(\psi_n u_n)$ and $I_{\o}\left((1-\psi_n) u_n\right)$. Let us start evaluating the local terms.
\begin{align*}
\int_0^{n} |u'_n|^2 r\, dr
&=\int_0^{n} |(u_n \psi_n)'|^2 r\, dr
+\int_0^{n} \big|\big(u_n (1-\psi_n)\big)'\big|^2 r\, dr
+O(\|u_n\|),
\\
\int_0^{n} u_n^2 r\, dr
&=\int_0^{n} |u_n \psi_n|^2 r\, dr
+\int_0^{n} \big|u_n (1-\psi_n)\big|^2 r\, dr
+O(\|u_n\|),
\\
\int_0^{n} |u_n|^{p+1} r\, dr
&=\int_0^{n} |u_n\psi_n |^{p+1} r\, dr
+\int_0^{n} |u_n (1-\psi_n)|^{p+1} r\, dr
+O(\|u_n\|).
\end{align*}
Let us study now the nonlocal term.
\begin{align*}
&\int_0^n \frac{u_n^2(r)}{r}\left(\int_0^r s u_n^2(s)\, ds\right)^2 dr
=\int_0^n \frac{u_n^2(r)\psi_n^2(r)}{r}\left(\int_0^r s u_n^2(s)\psi_n^2(s)\, ds\right)^2 dr
\\
&\qquad+\int_0^n \frac{u_n^2(r)(1-\psi_n(r))^2}{r}\left(\int_0^r s u_n^2(s)(1-\psi_n(s))^2\, ds\right)^2 dr
\\
&\qquad+\underset{(I)}{\underbrace{\int_0^n \frac{u_n^2(r)\psi_n^2(r)}{r}\left(\int_0^r s u_n^2(s)(1-\psi_n(s))^2\, ds\right)^2 dr}}
\\
&\qquad+2\underset{(II)}{\underbrace{\int_0^n \frac{u_n^2(r)\psi_n^2(r)}{r}\left(\int_0^r s u_n^2(s)\psi_n^2(s)\, ds\right)\left(\int_0^r s u_n^2(s)(1-\psi_n(s))^2\, ds\right) dr}}
\\
&\qquad+O(\|u_n\|).
\end{align*}
We now estimate:
$$ (I) \ge 0,$$
\begin{align*}
(II)&=\int_{z_n}^n \frac{u_n^2(r)\psi_n^2(r)}{r}\left(\int_{z_n}^r s u_n^2(s)\psi_n^2(s)\, ds\right)\left(\int_0^{z_n+1} s u_n^2(s)(1-\psi_n(s))^2\, ds\right) dr
\\
&\quad+ O(\|u_n\|)
\\
& \ge c_n \|u_n (1-\psi_n)\|_{L^2(\RD)}^2+O(\|u_n\|),
\end{align*}
where
\[
 c_n=\int_{z_n}^n \frac{u_n^2(r)\psi_n^2(r)}{r}\left(\int_{z_n}^r s u_n^2(s)\psi_n^2(s)\, ds\right) dr\ge c>0.
\]
Therefore, we get
\beq\label{eq:ICR}
I_\o(u_n)\ge I_\o(u_n \psi_n)+I_\o\left(u_n(1-\psi_n)\right)
+c \|u_n (1-\psi_n)\|_{L^2(\RD)}^2+ O(\|u_n\|).
\eeq

\

{\bf Step 4:} The following estimate holds:

\beq \label{comparison} I_\o(u_n \psi_n) = 2\pi \xi_n J_\o(u_n
\psi_n) +O(\|u_n\|). \eeq

\

Indeed, by taking into account Proposition \ref{prop-fund},
\eqref{decay1} and the definition of $\psi_n$ \eqref{def-psi_n},
we have
\begin{align*}
\left|\int_0^n (u_n \psi_n)^2 r\ dr - \xi_n\int_0^n (u_n \psi_n)^2 \ dr \right|
&\le \int_0^n (u_n \psi_n)^2 |r-\xi_n|\ dr
\\
&\le  \int_{\xi_n-3\|u_n\|}^{\xi_n+\|u_n\|} u_n^2 |r-\xi_n|\ dr  +o(1)
\\
&\le  O(\|u_n\|)\int_{\xi_n-3\|u_n\|}^{\xi_n+\|u_n\|} u_n^2\ dr  +o(1)
=O(\|u_n\|).
\end{align*}
The estimates are similar for the other local terms of $I_\o$. For what concerns the nonlocal term, we get
\begin{align*}
&\int_0^n  \frac{(u_n \psi_n)^2(r)}{r}\left(\int_0^r s (u_n \psi_n)^2(s) \, ds\right)^2 dr
-\xi_n\int_0^n  (u_n \psi_n)^2(r)\left(\int_0^r  (u_n \psi_n)^2(s) \, ds\right)^2 dr
\\
&\qquad=\underset{(I)}{\underbrace{\int_0^{n}  (u_n \psi_n)^2(r) \left(\frac{1}{r}-\frac{1}{\xi_n}\right)\left(\int_0^r s (u_n \psi_n)^2(s) \, ds\right)^2 dr}}
\\
&\qquad\quad+\underset{(II)}{\underbrace{\frac 1{\xi_n}\int_0^{n}  (u_n \psi_n)^2(r)\left[\left(\int_0^r  s (u_n \psi_n)^2(s) \, ds\right)^2-\left(\int_0^r  \xi_n (u_n \psi_n)^2(s) \, ds\right)^2 \right] dr}},
\end{align*}
where
\beq\label{(I)}
(I)\le \int_{\xi_n-3\|u_n\|}^{\xi_n+\|u_n\|}  u_n^2(r) \frac{|\xi_n-r|}{r \xi_n}\left(\int_{\xi_n-3\|u_n\|}^{\xi_n+\|u_n\|}  s u_n^2(s) \, ds\right)^2 dr
+o(1)
=O(\|u_n\|)
\eeq
and
\begin{align}
(II)&\le
\frac 1{\xi_n}\int_{\xi_n-3\|u_n\|}^{\xi_n+\|u_n\|} u_n^2(r)\left|\int_{\xi_n-3\|u_n\|}^{\xi_n+\|u_n\|}  (s+\xi_n) u_n ^2(s) \, ds\right|
\left|\int_{\xi_n-3\|u_n\|}^{\xi_n+\|u_n\|}  (s-\xi_n) u_n^2(s) \, ds\right| dr\label{(II)}
\\
&\quad+o(1)\nonumber
\\
&=O(\|u_n\|). \nonumber
\end{align}

\

{\bf Step 5:} Conclusion for $\o > \o_0$.

\

By \eqref{eq:ICR} and \eqref{comparison}, we have
\beq\label{eq:ICR2} I_\o(u_n)\ge 2 \pi \xi_n J_\o(u_n
\psi_n)+I_\o(u_n(1-\psi_n)) +c \|u_n (1-\psi_n)\|_{L^2(\RD)}^2+
O(\|u_n\|). \eeq

Recall that $\|u_n \psi_n\|^2_{H^1(\R)} \ge \s>0$. By Proposition
\ref{extra}, we have that $J_\o(u_n \psi_n) \to c>0$, up to a
subsequence. Since $\xi_n \sim \|u_n\|^2$, it turns out from
\eqref{eq:ICR2} that $I_\o(u_n) > I_\o(u_n(1-\psi_n))$. But this
is a contradiction with the definition of $u_n$, which proves that
$\inf I_\o > -\infty$.

Let us now show that $I_{\o}$ is coercive. Indeed, take $u_n \in
H^1(\RD)$ an unbounded sequence, and assume that $I_{\o}(u_n)$ is
bounded from above. By Proposition \ref{prop-fund}, (iii), we
would obtain that $I_{\hat{\o}}(u_n) \to -\infty$ for any $\o_0<\hat{\o}
< \o$, a contradiction.

\

{\bf Step 6:} Conclusion for $\o = \o_0$.

\

As above, \eqref{eq:ICR2} gives a contradiction unless $J_\o(u_n
\psi_n) \rightarrow 0$. Proposition \ref{extra} now implies that
$\psi_n u_n (\cdot - t_n)\to w_{k_2}$ up to a subsequence, for some
$t_n \in (0,+\infty)$. Since $\xi_n \in D_n$ (see their definition
in \eqref{def-D_n}), we have that $|t_n-\xi_n|$ is bounded. With
this extra information, we have a better estimate of the decay of
the solutions: indeed,

\beq \label{decay2} |u_n(r)| + |u_n'(r)|<C {\rm exp}\left(- \sqrt{\o}
|r-\xi_n|\right),\quad \mbox{ for all } r>\xi_n-2\|u_n\|. \eeq

This allows us to do the cut-off procedure in a much more accurate way. Indeed, take
$$\tilde{z}_n= \xi_n-\|u_n\|. $$  Then, \eqref{decay2} implies that

\beq \label{new} \|u_n\|^2_{H^1(\tilde{z}_n, \tilde{z}_n+1)} \le C
{\rm exp}(- \sqrt{\o} \|u_n\|). \eeq

Define $\tilde{\psi}_n:[0,+\infty] \to [0,1]$ accordingly:
\[
\tilde{\psi}_n(r)=\left\{
\begin{array}{ll}
0, & \hbox{if }r\le \tilde{z}_n,
\\
1, & \hbox{if }  r \ge \tilde{z}_n+1,
\end{array} \ \ |\tilde{\psi}_n'(r)|\le 2.
\right.
\]

The advantage is that, in the estimate of $I_\o(u_n)$, now the
errors are exponentially small. Indeed, by repeating the estimates
of Step 3 with the new information \eqref{new}, we obtain:
\[
 I_\o(u_n)\ge I_\o(u_n
\tilde\psi_n)+I_\o(u_n(1-\tilde\psi_n)) +c \|u_n (1-\tilde\psi_n)\|_{L^2(\RD)}^2+
o(1).
\]

Let us show that in this case \eqref{comparison} becomes
\[
 I_\o(u_n \tilde\psi_n) = 2 \pi \xi_n J_\o(u_n \tilde\psi_n)
 +O(1).
\]
Indeed, by \eqref{decay2} and \eqref{new}, we have
\[
\left|\int_0^n (u_n \tilde\psi_n)^2 r\ dr - \xi_n\int_0^n (u_n \tilde\psi_n)^2 \ dr \right|
\le \ir  (u_n \tilde\psi_n)^2 |r-\xi_n|\ dr \le C;
\]
the other local terms can be estimated similarly. For what concerns the nonlocal term, we repeat the arguments of the previous case using in \eqref{(I)} and \eqref{(II)} the informations contained  in \eqref{decay2} and \eqref{new}.
\\
Then,
\begin{align*}
I_\o(u_n)&\ge I_\o(u_n \tilde{\psi}_n)+I_\o\big(u_n(1-\tilde{\psi}_n)\big)
+c \|u_n (1-\tilde{\psi}_n)\|_{L^2(\RD)}^2+ O(1)
\\
& = 2 \pi\xi_n J_\o(u_n
\tilde{\psi}_n)+I_\o\big(u_n(1-\tilde{\psi}_n)\big) +c \|u_n
(1-\tilde{\psi}_n)\|_{L^2(\RD)}^2+ O(1)
\\
&\ge  I_{(\o+2c)} \big(u_n(1-\tilde{\psi}_n)\big)+ O(1).
\end{align*}
But, by Case 1, we already know that $I_{(\o+2c)}$ is bounded from below, and hence $\inf I_{\o_0} > -\infty$.

Finally,  applying Lemma \ref{le:asin} to $U=w_{k_2}$, we readily get
that $I_{\o_0}$ is not coercive.

\end{proof}

\begin{proof}[Proof of Theorem \ref{teo2}] We shall prove each assessment separately.

{\bf Proof of (i).}
Let $u$ be a solution of \eqref{equation}.
We multiply \eqref{equation} by $u$ and integrate: taking into account the inequality \eqref{ineq}, we get
\begin{align*}
0&= \ird \left(|\n u|^2+\o u^2\right) dx
+\frac 34\ird \frac{u^2(x)}{|x|^2}\left(\int_0^{|x|}s u^2(s)\ ds \right)^2 dx
- \ird |u|^{p+1} dx
\\
&\ge \frac 14\ird |\n u|^2 dx
+\ird \left(\o u^2+\frac 34 u^4-|u|^{p+1}\right)dx.
\end{align*}
Observe that there exists $\bar{\o}>0$ such that, for $\o > \bar{\o}$, the function $ t \mapsto \o t^2+\frac 34 t^4-|t|^{p+1}$ is non-negative. Therefore $u$ must be identically zero.

\
\\
{\bf Proof of (ii).} First, we observe that since $\inf
I_{\o_0}<0$, there exists $\tilde{\o}>\o_0$ such that $\inf I_\o<0$
if and only if $\o\in (\o_0,\tilde{\o})$. Since, by Theorem
\ref{teo1} and Proposition \ref{prop:weak}, $I_\o$ is coercive and
weakly lower semicontinuous, we infer that the infimum is
attained.
\\
Clearly, $0$ is a local minimum for $I_\o$. Then, if $\o \in
(\o_0,\tilde{\o})$, so the functional satisfies the geometrical
assumptions of the Mountain Pass Theorem, see \cite{ar}. Since
$I_\o$ is coercive, (PS) sequences are bounded. By the compact
embedding of $\Hr$ into $L^{p+1}(\R^2)$ and Proposition
\ref{prop:weak}, standard arguments show that $I_\o$ satisfies the
Palais-Smale condition and so we find a second solution which is
at a positive energy level.

\
\\
{\bf Proof of (iii).}
Let now consider $\o\in (0,\o_0)$. Performing the rescaling $u\mapsto u_\o=\sqrt{\o}\ u(\sqrt{\o}\ \cdot)$, we get
\begin{align*}
I_\o(u_\o) & =  \o \left[ \frac 12 \int_{\R^2} \left(|\nabla u|^2
+  u^2\right) dx
+ \frac{1}{8} \int_{\R^2}
\frac{u^2(x)}{|x|^2}\left(\int_0^{|x|} s u^2(s) \, ds\right)^2 dx \right.
\\
 & \left.\qquad  -
\frac{\o^\frac{p-3}{2}}{p+1}  \int_{\R^2} |u|^{p+1} \, dx\right].
\end{align*}
Define $\l=\o^\frac{p-3}{2}$ and $\mathcal{I}_\l:\Hr \to \R$ as
\begin{align*}
{\mathcal I}_\l(u) & =  \frac 12 \int_{\R^2} \left(|\nabla u|^2
+  u^2\right) dx
+ \frac{1}{8} \int_{\R^2}
\frac{u^2(x)}{|x|^2}\left(\int_0^{|x|} s u^2(s) \, ds\right)^2 dx
\\
 & \quad  -
\frac{\l}{p+1}  \int_{\R^2} |u|^{p+1} \, dx.
\end{align*}
Since $\I_\l$ satisfies the geometrical assmptions of the Mountain Pass Theorem, by \cite[Theorem 1.1]{jeanjean}, we
infer that, for almost every $\l$, the functional $\I_\l$ possesses a bounded Palais-Smale sequence $u_n$.
Assume $u_n \weakto u$; Proposition \ref{prop:weak} and standard arguments imply that $u$ is a critical point of
${\mathcal I}_\l$. Making the change of variables back we obtain a solution of \eqref{equation} for almost every $\o\in (0,\o_0)$.

\
\\
Finally, in order to find positive solutions of \eqref{equation}, we simply observe that the whole argument applies to the functional $I_\o^+:\Hr \to \R$
\begin{align*}
I_\o^+(u) & =  \frac 12 \int_{\R^2} \left(|\nabla u|^2
+ \o u^2\right) dx
+ \frac{1}{8} \int_{\R^2}
\frac{u^2(x)}{|x|^2}\left(\int_0^{|x|} s u^2(s) \, ds\right)^2 dx
\\
 & \quad  -
\frac{1}{p+1}  \int_{\R^2} (u^+)^{p+1} \, dx.
\end{align*}
Due to the maximum principle, the critical points of $I_\o^+$ are positive solutions of \eqref{equation}.

\end{proof}

\end{document}